\newtheorem{theorem}{Theorem}[section]
\newtheorem{lemma}[theorem]{Lemma}
\newtheorem{proposition}[theorem]{Proposition}
\newtheorem{corollary}[theorem]{Corollary}
\theoremstyle{definition}
\newtheorem{definition}[theorem]{Definition}
\newtheorem{example}[theorem]{Example}
\newtheorem{question}[theorem]{Question}
\numberwithin{equation}{section}
\newcommand{\veps}{\varepsilon}
\newcommand{\CC}{\mathbb C}
\newcommand{\RR}{\mathbb R}
\newcommand{\QQ}{\mathbb Q}
\newcommand{\TT}{\mathbb T}
\newcommand{\NN}{\mathbb N}
\newcommand{\ZZ}{\mathbb Z}
\newcommand{\dwa}{{\mathcal D}_{\mathrm{w.a.}}}
\newcommand{\ldens}{\underline{\mathrm{dens}}}
\newcommand{\ddens}{{\mathcal D}_{\mathrm{dens}}}
\begin{document}
\setcounter{page}{1}

\title[Rearrangement universality]
{Universality of general Dirichlet series with respect to translations and rearrangements}
\date{\today}

\author[F. Bayart]{Frédéric Bayart}

\address{Laboratoire de Math\'ematiques Blaise Pascal UMR 6620 CNRS, Universit\'e Clermont Auvergne, Campus universitaire des C\'ezeaux, 3 place Vasarely, 63178 Aubi\`ere Cedex, France.}
\email{frederic.bayart@uca.fr}



\keywords{Dirichlet series, universality}

\begin{abstract}
We give sufficient conditions for a general Dirichlet series to be universal with respect to translations or rearrangements.
\end{abstract}
\maketitle

\section{Introduction}
\subsection{Universality with respect to translations}

A general Dirichlet series  is a series $\sum_{n=1}^{+\infty} a_n e^{-\lambda_n s}$ where $(a_n)\subset\CC^\mathbb N$, $s\in\CC$ and $\lambda=(\lambda_n)$ is an increasing sequence of nonnegative real numbers tending to $+\infty$, called a \emph{frequency}. The two most natural examples are $(\lambda_n)=(n)$ which gives rise to power series and $(\lambda_n)=(\log n)$, the case of ordinary Dirichlet series.
To a Dirichlet series $D=\sum_{n=1}^{+\infty} a_n e^{-\lambda_n s}$ we will associate three abscissas, its abscissa of convergence, 
$$\sigma_c(D):=\inf\left\{\Re e(s):\ \sum_n a_n e^{-\lambda_n s}\textrm{ converges}\right\},$$
 its abscissa of absolute convergence 
$$\sigma_a(D):=\inf\left\{\sigma\in\RR:\ \sum_n |a_n|e^{-\lambda_n\sigma}\textrm{ converges}\right\},$$
and also
$$\sigma_2(D):=\inf\left\{\sigma\in\RR:\ \sum_n |a_n|^2e^{-2\lambda_n\sigma}\textrm{ converges}\right\}.$$
It is well-known that $\sum_n a_ne^{-\lambda_n s}$ converges in the half-plane $\CC_{\sigma_c(f)}$, where $\CC_{\sigma}=\{s:\ \Re e(s)>\sigma\}$ and that it defines a holomorphic function there. 

In this paper, we are interested in universal properties of Dirichlet series with respect to vertical translations. The first result in that direction comes from the seminal work of Voronin \cite{VORONIN} and says that the Riemann-Zeta function $\zeta(s)=\sum_{n\geq 1}n^{-s}$ is universal in the critical strip. Voronin's theorem may be stated as follows:
\begin{quote}
Let $K$ be a compact subset of $\{1/2<\Re e(s)<1\}$ with connected complement, let $f$ be a non-vanishing function continuous on $K$ and holomorphic in the interior of $K$. Then 
$$\ldens\left\{\tau\geq 0:\ \sup_{s\in K}|\zeta(s+i\tau)-f(s)|<\veps\right\}>0$$
where $\ldens(A)$ denotes the lower density of $A\subset\RR_+$. 
\end{quote}

Since then, many works have been done on this subject proving universality (and more) in various classes of Dirichlet series, see the survey paper \cite{Matsu15}.
Most of these examples are ordinary Dirichlet series. 
Regarding general Dirichlet series, it is worth mentioning the universality of Lerch Zeta functions defined by 
 $$\zeta(s;\alpha,\lambda)=\sum_{n=0}^{+\infty}e^{2\pi i \lambda n}(n+\alpha)^{-s}$$
 where $0<\alpha\leq 1$ and $\lambda\in\mathbb R$.
 The universality of $\zeta(\cdot;\alpha,\lambda)$ can be expressed in the following form:
\begin{quote}
Let $0<\alpha\leq 1$ be transcendental and let $\lambda\notin\ZZ$. Let $K$ be a compact subset of $\{1/2<\Re e(s)<1\}$ with connected complement, let $f$ be a function continuous on $K$ and holomorphic in the interior of $K$. Then 
$$\ldens\left\{\tau\geq 0:\ \sup_{s\in K}|\zeta(s+i\tau;\alpha,\lambda)-f(s)|<\veps\right\}>0.$$
\end{quote}
Observe now that we do not assume that $f$ does not vanish and we will say that $\zeta(\cdot;\alpha,\lambda)$ is strongly universal in $\{1/2<\Re e(s)<1\}$. 
In fact, one can also prove universal properties of $\zeta(\cdot;\alpha,\lambda)$ when $\alpha$ is rational (but the method is rather different and reduces to Voronin's type results), see \cite{GaLa02} for details. 	

The first attempt to get a general result of (strong) universality for general Dirichlet series was done in \cite{LSS03}.
Assume that $(\lambda_n)$ is linearly independent over $\QQ$. Let $\sigma_0<\sigma_a(D)$, put $r(x)=\sum_{\lambda_n\leq x}1$ and $c_n=a_n\exp(-\lambda_n \sigma_a(D))$. We assume that
\begin{enumerate}[(i)]
\item $D$ cannot be represented as an Euler product;
\item $D$ can be continued meromorphically to $\{\Re e(s)>\sigma_0\}$, and holomorphically
in $\{\sigma_0<\Re e(s)<\sigma_a(D)\}$;
\item For $\sigma>\sigma_0$, $D(s)=O(|t|^\alpha)$ with some $\alpha>0$ (here and elsewhere, we write $s=\sigma+it$);
\item For $\sigma>\sigma_0$, 
$$\int_{-T}^T |D(\sigma+it)|^2 dt =O(T);$$
\item $r(x)=Cx^\kappa+O(1)$ with $\kappa>1$;
\item $|c_n|$ is bounded and $\sum_{\lambda_n\leq x}|c_n|^2=\theta r(x)(1+o(1))$ for some $\theta>0$.
\end{enumerate}
Then $D$ is strongly universal in $\{\sigma_0<\Re e(s)<\sigma_a(D)\}$. 

This result gives sense to the conjecture of Linnik and Ibragimov that all ``reasonable'' Dirichlet series meromorphically continuable to the left of the half-plane of absolute convergence are universal in some suitable region. However, the conditions (v) and (vi) are rather rigid (and they are not satisfied by the Lerch Zeta functions) whereas, given a 
Dirichlet series, it is not so easy to testify whether (iv) holds true.

\medskip


Let us compare the Riemann Zeta function and Lerch Zeta functions, $\lambda\notin\ZZ$. In the former case $\sigma_c(\zeta)=\sigma_a(\zeta)=1$ and $\zeta$ is universal in a strip where it is defined by analytic continuation. In the latter case,  $\sigma_c(\zeta(\cdot;\alpha,\lambda)=0<\sigma_a(\zeta(\cdot;\alpha,\lambda))=1$ and $\xi(\cdot;\alpha,\lambda)$ is universal in a strip where the Dirichlet series converges.  In this paper, we will mainly consider Dirichlet series $D$ for which $\sigma_c(D)<\sigma_a(D)$ 
and we will investigate universal properties of $D$ in the strip $\{\sigma_c(D)<\Re e(s)<\sigma_a(D)\}$, or in a smaller one. In particular, the condition $\sigma_c(D)<\sigma_a(D)$ implies that the sequence 
$(\lambda_n)$ satisfies $\limsup_{n\to+\infty}\frac{\log n}{\lambda_n}>0$, which means that $(\lambda_n)$ cannot grow too fast to $+\infty$.
 
A consequence of our
results is the following theorem showing strong universality by only looking at the frequency and the coefficients of a Dirichlet series. 

\begin{theorem}\label{thm:universaltranslation}
Let $\sigma_0\in \mathbb R$ and let $D(s)=\sum_{n\geq 1}a(n)e^{-\lambda(n)s}$ be a Dirichlet series satisfying the following assumptions:
\begin{enumerate}[(i)]
\item $\sigma_2(D)\leq \sigma_0<\sigma_a(D)$;
\item for all $n\geq 1$, $a(n)=\rho(n)e^{i\omega n}$ for some $\omega\notin 2\pi\mathbb Z$ and $\rho(n)\geq 0$;
\item $\lambda,\rho:[1,+\infty)\to [0,+\infty)$ are $\mathcal C^2$-functions with $\lambda$ increasing and tending to $+\infty$ and $\lambda'$ nonincreasing and tending to $0$;
\item for all $\sigma>\sigma_0$, the function $x\mapsto \rho(x)e^{-\lambda(x)\sigma}$ is nonincreasing;
\item for all $\alpha,\beta>0$, there exist $C>0$ and $x_0\geq 1$ such that, for all $x\geq x_0$, 
$$\sum_{\lambda(n)\in \left[x,x+\frac\alpha{x^2}\right]}|a(n)|\geq Ce^{(\sigma_a(D)-\beta)x};$$
\item the sequence $(\lambda(n))$ is $\mathbb Q$-linearly independent.
\end{enumerate}
Let $K$ be a compact subset of $\{\sigma_0<\Re e(s)<\sigma_a(D)\}$ with connected complement, let $f$ be a function continuous on $K$ and holomorphic in the interior of $K$. Then 
$$\ldens\left\{\tau>0:\ \sup_{s\in K}|D(s+i\tau)-f(s)|<\veps\right\}>0.$$
\end{theorem}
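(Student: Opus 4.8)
The plan is to run Bagchi's probabilistic approach on the infinite torus $\ttinf$, equipped with its Haar measure $m$. For $\theta=(\theta_n)\in\ttinf$ set $D_\theta(s)=\sum_{n\ge 1}a(n)\theta_n e^{-\lambda(n)s}$. By (i) we have $\sum_n|a(n)|^2e^{-2\lambda(n)\sigma}<\infty$ for every $\sigma>\sigma_0$, so by standard results on random Dirichlet series (Kolmogorov's three series theorem plus Fubini) $D_\theta$ converges locally uniformly on $\{\Re e\, s>\sigma_0\}$ for $m$-almost every $\theta$, and $\theta\mapsto D_\theta$ is a random element of the Fréchet space of holomorphic functions there. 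Assumption (ii) lets one locate $\sigma_c(D)$: the partial sums of $\sum_n e^{i\omega n}$ are bounded (since $\omega\notin 2\pi\ZZ$) and, by (iv) together with (i), $|a(n)|e^{-\lambda(n)\sigma}$ decreases to $0$ for each $\sigma>\sigma_0$, so Dirichlet's test gives $\sigma_c(D)\le\sigma_0<\sigma_a(D)$; thus $D$ is holomorphic on the strip and $D(\cdot+i\tau)=D_{(e^{-i\lambda(n)\tau})_n}$. By (vi) and Weyl's theorem the flow $T_\tau\theta=(e^{-i\lambda(n)\tau}\theta_n)_n$ is uniquely ergodic on $\ttinf$.

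The theorem then reduces to the support statement
\[
(\star)\qquad m\Bigl(\bigl\{\theta\in\ttinf:\ \sup_{s\in K}|D_\theta(s)-f(s)|<\veps\bigr\}\Bigr)>0 .
\]
Indeed, approximating $D_\theta$ by its finite sections $D_\theta^N=\sum_{n\le N}a(n)\theta_n e^{-\lambda(n)s}$ (which depend on finitely many coordinates), the measure of $\{\theta:\sup_K|D_\theta-D_\theta^N|<\veps'\}$ tends to $1$ as $N\to\infty$ by (i) and Chebyshev, while on the translation side one controls the tails of the genuine translates in mean square, $\frac1T\int_0^T\sup_K\bigl|\sum_{n>N}a(n)e^{-\lambda(n)(s+i\tau)}\bigr|^2d\tau$, by a Carlson/Montgomery-type estimate — this is where the $\mathcal C^2$-regularity and the monotone decay of $\lambda'$ in (iii) enter, through van der Corput bounds on the off-diagonal integrals $\int_0^T e^{i(\lambda(m)-\lambda(n))\tau}d\tau$. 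Feeding this into the Weyl equidistribution of $T_\tau$ (applied to continuous functions approximating the indicator of the open set in $(\star)$ from inside) then yields $\ldens\{\tau>0:\sup_K|D(\cdot+i\tau)-f|<\veps\}>0$.

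To prove $(\star)$, first note $\sum_n 1/\lambda(n)=\infty$: (v) forces the gaps $\lambda(n+1)-\lambda(n)$ to be $o(\lambda(n)^{-2})$, whence the claim. Combined with Mergelyan's theorem and the identities $\partial_\mu^k e^{-\mu s}=(-s)^ke^{-\mu s}$, a Müntz-type density lemma gives $\overline{\mathrm{span}}\{e^{-\lambda(n)s}:n\ge N\}=A(K)$ for every $N$, together with a bound $|b_n|\le Ce^{(\sigma_a(D)-\beta_0)\lambda(n)}$ on the coefficients of the approximant, where $\beta_0$ is any fixed number in $(0,\sigma_a(D)-\max_{s\in K}\Re e\, s)$. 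Now fix $N$ so large that the $\ell^2$-tail $\sum_{\lambda(n)\ge\lambda(N)}|a(n)|^2e^{-2\lambda(n)\sigma}$ (with $\sigma=\min_{s\in K}\Re e\, s$) is tiny, let $g_0=\sum_{\lambda(n)<\lambda(N)}a(n)e^{-\lambda(n)s}$ be the now fixed initial contribution, and pick a Dirichlet polynomial $Q=\sum_{n\in G}b_ne^{-\lambda(n)s}$, $G$ finite with all $\lambda(n)$ as large as required, such that $\sup_K|f-g_0-Q|$ is small and $|b_n|\le Ce^{(\sigma_a(D)-\beta_0)\lambda(n)}$. For each $n\in G$ apply (v) with small $\alpha>0$ and some $\beta\in(0,\beta_0)$ at $x=\lambda(n)$: the indices $m$ with $\lambda(m)\in[\lambda(n),\lambda(n)+\alpha\lambda(n)^{-2}]$ carry mass $\sum|a(m)|\ge Ce^{(\sigma_a(D)-\beta)\lambda(n)}\ge|b_n|$, and since by (iv) these $|a(m)|$ are mutually comparable over so short a window (so none dominates the sum) and $\rho\ge 0$, one can select a finite set $F_n$ of them and unimodular numbers $\theta_m^0$ with $\sum_{m\in F_n}\theta_m^0 a(m)$ as close to $b_n$ as desired; on $K$ the difference between $b_ne^{-\lambda(n)s}$ and $\sum_{m\in F_n}\theta_m^0 a(m)e^{-\lambda(m)s}$ is then the coefficient mismatch plus a frequency-mismatch term $\lesssim\bigl(\sum_{m\in F_n}|a(m)|\bigr)\alpha\lambda(n)^{-2}e^{-\sigma\lambda(n)}$, both negligible once $\alpha$ is small. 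With $F^+=\bigsqcup_{n\in G}F_n$ (disjoint, all frequencies $\ge\lambda(N)$) one obtains $\sup_K\bigl|g_0+\sum_{m\in F^+}\theta_m^0 a(m)e^{-\lambda(m)s}-f\bigr|<\veps/2$; by continuity this persists, with $\veps$, for every $\theta$ whose coordinates on the initial block are near $(1,1,\dots)$ and whose coordinates on $F^+$ are near $(\theta_m^0)$. Intersecting with the positive-measure set on which the remaining high-frequency tail satisfies $\|\sum_{m\notin(\text{initial block})\cup F^+}\theta_m a(m)e^{-\lambda(m)\cdot}\|_{\infty,K}<\veps/4$ (Chebyshev plus a Kahane-type bound in $A(K)$) and invoking independence of the coordinates yields $(\star)$.

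The hard part is the quantitative approximation just outlined: one must produce a Dirichlet polynomial whose \emph{exponents are among the $\lambda(n)$} and whose coefficients are not too large, and then rebuild each coefficient out of the $|a(m)|$-mass sitting in a window of length $\alpha\lambda(n)^{-2}$ around $\lambda(n)$ without losing control of the frequency-mismatch error. The delicate balance is exactly (v) — the mass $e^{(\sigma_a(D)-\beta)\lambda(n)}$ must beat the coefficient size $\lesssim e^{(\sigma_a(D)-\beta_0)\lambda(n)}$ — together with (iv) and (iii), which keep that window simultaneously massive and narrow. A secondary technical hurdle is the averaged mean value estimate of the second step, needed to pass from the torus back to genuine vertical translations; there the $\mathcal C^2$-smoothness and the monotonicity of the frequency gaps in (iii) do their work.
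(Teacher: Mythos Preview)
Your overall Bagchi framework is correct, but two of the key steps do not go through as you describe them.

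First, the mean-square estimate. You attribute it to ``van der Corput bounds on the off-diagonal integrals $\int_0^T e^{i(\lambda(m)-\lambda(n))\tau}d\tau$'', but those integrals are trivially $O(|\lambda(m)-\lambda(n)|^{-1})$ and require no second-derivative input. Summing the off-diagonal contribution by Montgomery--Vaughan then leaves a term of size $\sum_{n>N}|a(n)|^2e^{-2\lambda(n)\sigma}\delta_n^{-1}$ with $\delta_n=\min_{m\neq n}|\lambda(m)-\lambda(n)|$, and since (v) forces $\delta_n=o(\lambda(n)^{-2})$ this sum can diverge: for Example~\ref{ex:2} it behaves like $\sum n^{-1}(\log n)^{1-2\sigma}$, which is infinite throughout $\sigma<1$. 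The paper obtains the bound by a different mechanism in which hypothesis (ii) is essential (you only invoke (ii) to locate $\sigma_c$): one writes the tail $\sum_{n\ge x}a(n)e^{-\lambda(n)s}$ as the exponential sum $\sum_{n\ge x}\rho(n)e^{-\lambda(n)\sigma}e^{i(\omega n-t\lambda(n))}$ and applies Lemma~\ref{lem:vdc} to \emph{that} sum, choosing a $T$-dependent cutoff $x=x(T)$ so that the phase derivative $\frac{1}{2\pi}(\omega-t\lambda'(u))$ stays bounded away from $\ZZ$ for all $t\in[0,T]$. This gives a \emph{pointwise} bound $|R(x,s)|\le C(\omega)\rho(x)e^{-\lambda(x)\sigma}$; the head $D_{x-1}$ is then handled by Montgomery--Vaughan together with an Abel summation that trades the resulting factor $1/\lambda'(x)$ for $T$.

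Second, the support statement $(\star)$. The assertion that a M\"untz-type approximation can be arranged with coefficients obeying $|b_n|\le Ce^{(\sigma_a(D)-\beta_0)\lambda(n)}$ is not a consequence of any standard M\"untz theorem, and you give no argument for it; M\"untz approximants generically have no such control, and without this bound your rebuilding step has nothing to work with. The paper avoids any explicit construction by passing to the dual: by Laurin\v{c}ikas' density criterion (Lemma~\ref{lem:densitysums}), it suffices to show $\sum_n|a(n)|\,|\mathcal L_\mu(\lambda(n))|=+\infty$ for every nonzero compactly supported measure $\mu$ on $U$, where $\mathcal L_\mu(z)=\int e^{-sz}d\mu(s)$. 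Since $\mathcal L_\mu$ is entire of exponential type with $\limsup_{x\to\infty}x^{-1}\log|\mathcal L_\mu(x)|\ge -\max_{s\in K}\Re e(s)$, Lemma~\ref{lem:exponentialgrowth} produces a sequence of intervals $[y_j,y_j+\alpha/y_j^2]$ on which $|\mathcal L_\mu|\ge \tfrac12 e^{-(b+\beta)y_j}$; summing $|a(n)|$ over those intervals via (v) gives the divergence. This is precisely the computation of Section~\ref{SEC:PROOF}, and it explains why the same hypothesis (v) drives both rearrangement universality and the translation result.
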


This theorem leads to several very simple examples: the first one generalizes Lerch Zeta functions.
\begin{example}\label{ex:1}
Let $P\in\mathbb R_d[X]$ with $d\geq 1$ and $\lim_{+\infty}P=+\infty$, let $Q\in\mathbb R_{d-1}[X]$, let $\omega\in\mathbb R\backslash 2\pi\ZZ$ and let $\gamma\in\mathbb R$. Assume moreover that $(\log(P(n))_{n\geq 1}$ is $\mathbb Q$-linearly independent. Then the Dirichlet series $D(s)=\sum_{n\geq 1}Q(n)(\log n)^\gamma e^{i\omega n}(P(n))^{-s}$
is strongly universal in $\{(2d-1)/2d<\Re e(s)<1\}$.
\end{example}

One can also find examples where the frequency grows very slowly with a half-plane of universality.

\begin{example}\label{ex:2}
Let $\gamma\geq e-1$ be such that $(\log\log(n+\gamma))_{n\geq 1}$ is $\mathbb Q$-linearly independent and let $\omega\notin2\pi\ZZ$. Let 
$$D(s)=\sum_{n\geq 1}\frac{e^{i\omega n}}n e^{-(\log\log(n+\gamma))s}.$$
Then $D$ is strongly universal in $\{\Re e(s)<1\}$.
\end{example}

Let us comment the assumptions of Theorem \ref{thm:universaltranslation}. (iii) is a way to quantify that $\lambda$ does not go to quickly to $+\infty$. (ii) and (iv) are the natural conditions to ensure that $\sigma_c(D)\leq \sigma_0$ whereas (vi) is mandatory to apply Kronecker's theorem. Only (v) could be surprizing. The Bohr-Cahen formula for $\sigma_a(D)$ ensures that, for all $\beta>0$, for all $N\geq 1$, there exists $x\geq N$ such that
$\sum_{\lambda(n)\leq x}|a(n)|\geq C e^{(\sigma_a(D)-\beta)x}$. Condition (v) means that the sequence $(\lambda(n))$ grows and the sequence $(a(n))$ decreases sufficiently slowly so that this inequality remains true if we restrict the sum to a small neighbourhood of $x$. 

Among the classical Dirichlet series, it seems that the prime number series $D(s)=\sum_{n\geq 1}p_n^{-s}$, where $(p_n)$ is the increasing sequence of prime numbers, has not been investigated from the point of view of universality. Observe that it more difficult to extend meromorphically $D$ to $\mathbb C_{1/2}$ and that, if we do not assume the Riemann hypothesis, we just know that it extends meromorphically to $\mathbb C_{1/2}$ by removing some horizontal line segments (see Section \ref{sec:primeuniversality}). Nevertheless, from the universality of $\log\zeta$, we will be able to deduce that of $D$.

\begin{theorem}\label{thm:primeuniversal}
Let $D(s)=\sum_{n\geq 1}p_n^{-s}$. Let $K$ be a compact subset of $\{1/2<\Re e(s)<1\}$ with connected complement, let $f$ be a function continuous on $K$ and holomorphic in the interior of $K$.  
Then the set of posive real numbers $\tau$ such that $D(\cdot+i\tau)$ is well-defined on $K$ and satisfies 
$$\sup_{s\in K}|D(s+i\tau)-f(s)|<\veps$$
has positive lower density.
\end{theorem}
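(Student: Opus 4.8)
The starting point is the classical universality of $\log\zeta$ in the strip $\{1/2<\Re e(s)<1\}$: for any admissible $K$ and any $f$ continuous on $K$, holomorphic in the interior, the set of $\tau$ with $\sup_{s\in K}|\log\zeta(s+i\tau)-f(s)|<\veps$ has positive lower density (this is the strong-universality form, since no non-vanishing hypothesis is needed for $\log\zeta$). So the plan is to relate $D(s)=\sum_n p_n^{-s}$ to $\log\zeta(s)$ and transfer the universality. The relevant identity is the Euler-product expansion $\log\zeta(s)=\sum_p\sum_{k\geq 1}\frac{1}{k}p^{-ks}=\sum_p p^{-s}+\sum_p\sum_{k\geq 2}\frac1k p^{-ks}=D(s)+R(s)$, where $R(s):=\sum_p\sum_{k\geq 2}\frac1k p^{-ks}$. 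The key observation is that $R$ converges absolutely and defines a bounded holomorphic function on $\mathbb C_{1/4}$ (indeed $\sum_p\sum_{k\geq 2}\frac1k p^{-k\sigma}\leq \sum_p \frac{p^{-2\sigma}}{1-p^{-\sigma}}<\infty$ for $\sigma>1/2$, and this even extends past $1/2$), hence in particular $R$ is holomorphic on the closed strip we care about, and on any compact $K\subset\{1/2<\Re e(s)<1\}$ it is uniformly continuous and bounded. Therefore $D=\log\zeta-R$ inherits whatever approximation property $\log\zeta$ has, up to translating the target function by $R$.

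Carrying this out: given $K$ and $f$, set $g:=f+R$, which is again continuous on $K$ and holomorphic in its interior, so $\log\zeta$-universality applies to $g$: the set of $\tau>0$ with $\sup_{s\in K}|\log\zeta(s+i\tau)-g(s)|<\veps/2$ has positive lower density. For such $\tau$ we get $\sup_{s\in K}|D(s+i\tau)+R(s+i\tau)-f(s)-R(s)|<\veps/2$, so it remains to absorb the term $R(s+i\tau)-R(s)$. This is where the only real subtlety lies: $R$ does not tend to $0$ along vertical lines, so I cannot simply say $R(s+i\tau)\to R(s)$. Instead I use almost periodicity: $R(s)=\sum_p\sum_{k\geq 2}\frac1k p^{-ks}$ is a uniformly convergent (on the relevant strip) sum of exponentials $e^{-(k\log p)s}$, hence it is a uniformly almost periodic function on any vertical strip $\{1/2+\delta\leq\Re e(s)\leq 2\}$ in the sense of Bohr. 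Consequently the set of its $\veps/2$-almost-periods, $\{\tau:\ \sup_{s\in K}|R(s+i\tau)-R(s)|<\veps/2\}$, is relatively dense in $\mathbb R$, and in fact has positive lower density. The two sets — the $\log\zeta$-universality set for $g$ and the almost-period set for $R$ — are then intersected.

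The intersection step is the main technical obstacle, because positive lower density is not preserved under intersection in general. The standard fix, already used in universality proofs, is to prove more than positive lower density for one of the two sets, or to use the structure of the almost-period set. Concretely, the cleanest route is: the set $A$ of almost-periods of $R$ is relatively dense, hence there is $L>0$ with $A\cap[t,t+L]\neq\emptyset$ for all $t$; meanwhile the Voronin-type machinery for $\log\zeta$ actually produces, via the ergodic / Birkhoff approach over the infinite-dimensional torus $\mathbb T^\infty$ indexed by the primes, a set $B$ of universality translates for $g$ that is not merely of positive lower density but is itself ``large'' in a way compatible with $A$ — indeed one reruns the proof of $\log\zeta$-universality but simultaneously asks $(p^{-i\tau})_p$ to approximate the required torus point AND to make $R(s+i\tau)$ close to $R(s)$, the latter being an open condition on finitely many coordinates of the torus after truncating the tail of $R$. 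Since the tail $\sum_p\sum_{k> K_0}$ (and $\sum_{p>P_0}$) of $R$ is uniformly small, approximating finitely many coordinates $p^{-i\tau}\approx 1$ for $p\leq P_0$ already forces $|R(s+i\tau)-R(s)|<\veps/2$; and this extra finite set of conditions is absorbed into the same Kronecker/ergodic argument that underlies $\log\zeta$-universality, at no cost to positivity of the lower density. This yields $\sup_{s\in K}|D(s+i\tau)-f(s)|<\veps$ for $\tau$ in a set of positive lower density. Finally one checks that for all such (large) $\tau$ the series $D(\cdot+i\tau)=\sum_n p_n^{-(s+i\tau)}$ is genuinely well-defined on $K$: since $K\subset\mathbb C_{1/2}$ and $D$ has abscissa of convergence $\sigma_c(D)=1$ but — as recalled in the excerpt and detailed in Section 5 — extends (conditionally on RH, unconditionally off horizontal slits) as a convergent/analytic object past $1/2$, the translate is defined on $K$ for all but a thin set of $\tau$, and one intersects once more with the co-thin set of admissible $\tau$, which does not affect positive lower density.
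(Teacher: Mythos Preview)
Your decomposition $D=\log\zeta-R$ with $R(s)=\sum_p\sum_{k\geq 2}k^{-1}p^{-ks}$ absolutely convergent in $\mathbb C_{1/2}$ is correct, and the paper exploits the same link (in the equivalent form $D(s)=\sum_{k\geq 1}\mu(k)k^{-1}\log\zeta(ks)$ via M\"obius inversion). The genuine gap is exactly where you locate it, in the intersection step, and your proposed repair does not work as stated. You want $(p^{-i\tau})_p$ to approximate the torus point $z^*$ that realizes $\log\zeta(\cdot,z^*)\approx g=f+R$, \emph{and} to satisfy $p^{-i\tau}\approx 1$ for $p\leq P_0$ so that $R(\cdot+i\tau)\approx R$. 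But $z^*$ is dictated by $g$, and there is no reason its first $P_0$ coordinates are near $1$; generically they are not, and then the two cylinder conditions on $\mathbb T^\infty$ are disjoint. Equidistribution of $\tau\mapsto(p^{-i\tau})$ only guarantees that a \emph{single} fixed open set of positive Haar measure is hit with positive density; it says nothing about whether two prescribed open sets intersect. So the claim that the extra conditions are ``absorbed at no cost'' is unjustified. A valid repair would observe that the density lemma producing $z^*$ is a tail statement, so one may \emph{choose} $z^*$ with $z^*_p=1$ for $p\leq P_0$ and still get $\log\zeta(\cdot,z^*)\approx g$; but you do not carry this out, and once you do you are effectively proving that $\{D(\cdot,z):z\in\mathbb T^\infty\}$ is dense in $H(U)$ --- which is the paper's route.

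The paper sidesteps the intersection problem by working with probability measures throughout. It shows that the distribution $P_D$ of $z\mapsto D(\cdot,z)$ has full support in $H(U)$ (via the counting estimate of Example~\ref{ex:primerearrangement}), and then proves that the empirical measures $\tilde\nu_{T,D}$ of the vertical translates converge weakly to $P_D$. For this weak convergence it does not invoke universality of $\log\zeta$ as a black box; instead it establishes the mean-square estimate
\[
\lim_{N\to\infty}\ \limsup_{T\to\infty}\ \frac{1}{\mathrm{meas}(A_T)}\int_{A_T}\int_V |D(s+i\tau)-D_N(s+i\tau)|^2\,ds\,d\tau=0,
\]
by bounding $|D-D_N|$ through $|\log\zeta-\log\zeta_N|$ (controlled in mean square by \cite{Seip20}) plus the uniformly small tail $\sum_{k\geq 2}k^{-1}|\log\zeta(ks)-\log\zeta_N(ks)|$. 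Positive lower density then falls out of the portmanteau theorem, with no need to intersect two separately constructed good sets of $\tau$.
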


\subsection{Rearrangement universality}
The second problem we face in this paper is that of rearrangement universality of general Dirichlet series. Let $X$ be a topological vector space and let $(x_n)$ be a sequence of vectors in $X$. Assume that the series $\sum_n x_n$ is conditionally convergent, namely that $\sum_n x_n$ converges but that there exists a permutation $\sigma$ of $\mathbb N$ such that $\sum_n x_{\sigma(n)}$ diverges. A natural question is to study the sum range of $\sum x_n$, namely the set of elements $x\in X$ such that $\sum_{n}x_{\sigma(n)}$ converges to $x$ for some permutation $\sigma$. The extremal behaviour is attained when the sum range is the whole space $X$. This is always the case when $X$ is the real line by a famous theorem of Riemann; when $X$ is a finite-dimensional vector space, Steinitz theorem asserts that the sum range of $\sum x_n$ is always an affine subset of $X$ and it also gives a description of this sum range.

A natural example of conditionally convergent series is given by general Dirichlet series
for which $\sigma_c(D)<\sigma_a(D)$. In that case, the Dirichlet series $\sum_n a_n e^{-\lambda_n s}$ converges conditionally in the strip 
$\left\{s:\ \sigma_c(D)<\Re e(s)<\sigma_a (D)\right\}$. Is this series universal with respect to rearrangements in this strip?

\begin{definition}
Let $D(s)=\sum_n a_n e^{-\lambda_n s}$ be a Dirichlet series with $\sigma_c(D)<\sigma_a(D)$. 
We say that $D$ is rearrangement universal if, for all $f\in H(\Omega)$, where $\Omega$ is the strip $\{s:\ \sigma_c(D)<\Re e(s)<\sigma_a(D)\}$,  there exists a permutation $\sigma$ of $\NN$ such that 
$\sum_n a_{\sigma(n)}e^{-\lambda_{\sigma(n)}s}$ converges to $f$ in $H(\Omega)$. 
\end{definition}

This question seems to have been considered for the first time in \cite{GM21}, where the authors show that for almost all choices of signs $(\theta_n)$, $\sum_n \theta_n n^{-s}$ and $\sum_n \theta_n p_n^{-s}$ are (locally) rearrangement universal in the smaller strip $\{s\in\CC:\ 1/2<\Re e(s)<1\}$.

Our aim, in this paper, is to prove the following sufficient condition for rearrangement universality of Dirichlet series.

\begin{theorem}\label{thm:rearrangement}
Let $D(s)=\sum_n a_n e^{-\lambda_n s}$ be a Dirichlet series with $\sigma_c(D)<\sigma_a(D)$. 
Assume that for all $\alpha,\beta>0$, there exist $C>0$ and $x_0\geq 1$ such that, for all $x\geq x_0$, 
$$\sum_{\lambda(n)\in \left[x,x+\frac\alpha{x^2}\right]}|a(n)|\geq Ce^{(\sigma_a(D)-\beta)x}.$$
Then $D$ is rearrangement universal. 
\end{theorem}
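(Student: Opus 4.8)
The plan is to follow the classical Pecherskii-type strategy for Steinitz theorems in infinite dimensions, adapted to the space $H(\Omega)$, and to reduce the whole problem to a quantitative ``filling lemma'' which is where the hypothesis on $\sum_{\lambda(n)\in[x,x+\alpha/x^2]}|a(n)|$ will be used. First I would fix an exhaustion of $\Omega$ by compact sets $K_1\subset K_2\subset\cdots$ and note that $H(\Omega)$ is a Fréchet space whose topology is given by the seminorms $p_m(g)=\sup_{K_m}|g|$. Since $D$ converges on $\Omega$, after subtracting the partial-sum tail it is enough to show that the tail vectors $v_n(s)=a_ne^{-\lambda_ns}$, which form a conditionally (indeed not absolutely, on $\{\Re e(s)>\sigma\}$ for $\sigma<\sigma_a(D)$) convergent family summing to $0$ after removing their sum, have sum range all of $H(\Omega)$; equivalently, given any target $f$ and any $m$ and $\veps$, and any finite set already placed, one can rearrange so that all sufficiently long partial sums are within $\veps$ of $f$ in $p_m$. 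The engine for this is the Steinitz ``rounding-off'' / rearrangement lemma: if a finite collection of vectors has small sum and each is small, it can be ordered so that every partial sum is controlled by a fixed constant (the Steinitz constant of the normed space) times the max norm of the pieces. The space here is not normed but we work one seminorm $p_m$ at a time, and the relevant Steinitz-type constant for a Hilbert (or even Banach) space is finite, so this step is standard once we know the pieces can be made to have small $p_m$-norm and small partial sums.

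The substance is therefore the following: given $f\in H(\Omega)$, a compact $K_m$, and $\veps>0$, I must produce finite blocks $B_1,B_2,\dots$ of indices, partitioning $\NN$, such that the block sums $\sum_{n\in B_j}v_n$ approximate the successive increments needed to build $f$ out of the zero function (more precisely, such that partial sums over unions of blocks track $f$), while simultaneously each individual $v_n$ appearing in a not-yet-completed block contributes $p_m$-mass at most $\veps$. Since $\lambda_n\to+\infty$, for $n$ large $p_m(v_n)=|a_n|\sup_{s\in K_m}e^{-\lambda_n\Re e(s)}$ is automatically tiny (it is at most $|a_n|e^{-\lambda_n\sigma_m}$ with $\sigma_m=\min_{K_m}\Re e(s)>\sigma_c(D)$), so smallness of individual terms is free. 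What must be arranged is that the available blocks can realize, approximately, an arbitrary holomorphic target. This is exactly where I would invoke a density/Kronecker-flavoured argument: one needs that finite sums $\sum_{n\in B}a_ne^{-\lambda_ns}$, with $B$ ranging over suitable finite sets of large indices, are dense in $H(K_m)$ in the relevant sense. Here the hypothesis enters: it guarantees that in every short window $[x,x+\alpha/x^2]$ of frequencies there is $p_m$-mass of order $e^{(\sigma_a(D)-\beta)x}$, i.e. ``just barely'' divergent mass, which by an Abel-summation argument shows the series is not absolutely convergent in any left subhalf-plane and, more importantly, that we can assemble blocks whose sums have prescribed size in any direction after multiplying the involved $e^{i\lambda_n t}$ phases — but since we do not get to choose a translation in the rearrangement problem, the phases $e^{-\lambda_n s}$ for $s\in K_m$ must be exploited directly. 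Concretely, I would show that for any $g\in H(K_m)$ of the form $g(s)=\sum_{\text{finite}} c_k e^{-\mu_k s}$ (exponential polynomials are dense in $H(K_m)$ since $K_m$ has connected complement and by Runge/Mergelyan-type approximation after a conformal change, or via the Hahn-Banach duality with measures), one can find a finite block $B$ of large indices and a small remainder so that $\sum_{n\in B}v_n\approx g$ in $p_m$; the window condition provides enough terms of controlled size and spread-out frequencies to solve the resulting (approximate, finite-dimensional) moment problem, after which the standard Steinitz rounding controls the intermediate partial sums.

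The execution I envisage is iterative: enumerate a dense sequence $(g_k)$ in $H(\Omega)$ (say finite linear combinations of $e^{-\mu s}$ with $\mu$ in a countable dense set), and build the permutation in stages so that after stage $k$ all partial sums beyond some index are within $2^{-k}$ of $g_k$ in the seminorm $p_k$, then pass to $f$ by choosing $g_k\to f$ appropriately and a diagonal/telescoping bookkeeping; care is needed because conditionally convergent rearrangement universality requires that every index eventually be used exactly once, so at each stage one must also dump a controlled ``leftover'' batch of the smallest-remaining high-index terms (whose total $p_m$-effect is negligible and whose sum can itself be Steinitz-ordered) to ensure exhaustion. The main obstacle, and the place I expect the real work to be, is the finite-dimensional realizability step: proving that short frequency windows with the asserted mass lower bound actually let us hit an arbitrary exponential polynomial in $H(K_m)$ up to small error, uniformly enough to iterate. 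This is essentially a quantitative version of Kronecker's theorem combined with the observation that on $K_m$ the maps $s\mapsto e^{-\lambda_n s}$ for $\lambda_n$ in a tiny window are nearly parallel, so one is solving an ill-conditioned linear system and must use the mass lower bound to beat the loss of conditioning — the $\alpha/x^2$ scale in the hypothesis is presumably calibrated precisely so that the variation of $e^{-\lambda_n s}$ across $K_m$ over such a window is $O(1)$, making the block behave like a single large vector of adjustable magnitude and phase. Once that lemma is in hand, the remaining arguments are the routine Steinitz rounding and the Fréchet-space diagonalization sketched above.
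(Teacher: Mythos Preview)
Your proposal has a genuine structural gap and misses the key simplification the paper exploits.

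First, the Steinitz/Pecherskii infrastructure you invoke is not available in the form you claim. The assertion that ``the relevant Steinitz-type constant for a Hilbert (or even Banach) space is finite, so this step is standard'' is false in infinite dimensions: the Steinitz rounding lemma has constant depending on the dimension, and the Lévy--Steinitz theorem itself fails in general infinite-dimensional Banach spaces. Working ``one seminorm $p_m$ at a time'' does not help, since the seminormed piece $(H(\Omega),p_m)$ is still infinite-dimensional. What makes rearrangement universality tractable here is precisely that $H(\Omega)$ is a \emph{nuclear} Fréchet space; you never mention nuclearity, and without it the inductive construction you sketch has no reason to converge in all seminorms simultaneously.

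Second, your ``realizability'' step, which you correctly flag as the heart of the matter, will not go through as described. Within a window $[x,x+\alpha/x^2]$ the functions $e^{-\lambda_n s}$ are, as you observe, nearly parallel on $K_m$; hence a block sum $\sum_{n\in B}a_ne^{-\lambda_ns}$ is essentially $e^{-xs}\sum_{n\in B}a_n$, and the only freedom is the subset sum $\sum_{n\in B}a_n$. There is no mechanism here for ``adjustable phase'' --- the $a_n$ are fixed and no translation is allowed --- so a single window cannot hit an arbitrary direction, let alone an arbitrary exponential polynomial.

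The paper avoids both obstacles in one stroke. By Banaszczyk's theorem (Lemma~\ref{lem:rearrangementuniversal}), in a nuclear Fréchet space a convergent series $\sum x_n$ has full sum range as soon as $\sum_n|\phi(x_n)|=+\infty$ for every nonzero $\phi\in X^*$. Thus no rearrangement has to be built by hand: one only checks a dual divergence condition. A functional on $H(\Omega)$ is integration against a compactly supported measure $\mu$, so the condition becomes $\sum_n|a_n|\,|\mathcal L_\mu(\lambda_n)|=+\infty$ where $\mathcal L_\mu(z)=\int e^{-zs}d\mu(s)$. Since $\mathcal L_\mu$ is entire of exponential type with $\limsup_{x\to\infty}x^{-1}\log|\mathcal L_\mu(x)|\geq -b$, $b=\max_K\Re e(s)<\sigma_a(D)$, a short calculus lemma (Lemma~\ref{lem:exponentialgrowth}) produces intervals $[y_j,y_j+\alpha/y_j^2]$ on which $|\mathcal L_\mu|\geq \tfrac12 e^{-(b+\beta)y_j}$. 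Combining this lower bound with the hypothesis on $\sum_{\lambda_n\in[y_j,y_j+\alpha/y_j^2]}|a_n|$ gives the required divergence. The $\alpha/x^2$ scale is calibrated not to keep $e^{-\lambda_ns}$ nearly constant on $K_m$, but to match the interval length on which Markov's inequality controls the polynomial approximant of $\mathcal L_\mu$.
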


Of course since the condition appearing in Theorem \ref{thm:rearrangement} also appears
in Theorem \ref{thm:universaltranslation}, Examples \ref{ex:1} and \ref{ex:2}
yield rearrangement universal Dirichlet series. There are also some other examples, like
$D(s)=\sum_n (-1)^n n^{-s}$ and $D(s)=\sum_n (-1)^n p_n^{-s}$, or more generally $D(s)=\sum_n z_n n^{-s}$ and $D(s)=\sum_n z_n p_n^{-s}$ for all $(z_n)\in\mathbb T^\infty$ such that $\sigma_c(D)<\sigma_a(D)$, where $\mathbb T^\infty=\{(z_n):\ |z_n|=1\textrm{ for all }n\geq 1\}$.  

It does not come as a surprise that universality with respect to translations and rearrangement universality are linked. For instance, Voronin's original proof was based on 
Pecherski{\v{i}}'s rearrangement theorem in Hilbert spaces.

\subsection{Organisation of the paper}

The paper is organized as follows: Section \ref{SEC:LEMMA} contains some preliminary lemmas whereas Section \ref{SEC:PROOF} is devoted to the proof of Theorem \ref{thm:rearrangement}. The proof of Theorem \ref{thm:universaltranslation}
is divided into Sections \ref{SEC:CVMEASURE} and \ref{SEC:SUPPORT}.
Examples are detailed in Section \ref{SEC:EXAMPLES}.

\section{Useful results}\label{SEC:LEMMA}

We shall use the following lemma on functions with exponential growth. Several variants of it have already appeared in the literature. 

\begin{lemma}\label{lem:exponentialgrowth}
Let $f$ be an entire function of exponential type. Assume that there exists $d>0$ and a sequence $(x_j)$ of real numbers tending to $+\infty$ such that, 
for all $j\geq 1$, $|f(x_j)|\geq e^{-dxj}$.  Then there exist $\alpha>0$ and a sequence of real numbers $(y_j)$ tending to $+\infty$ such that, for all $j\geq 1$, for all $x\in \left[y_j,y_j+\frac{\delta}{y_j^2}\right]$, $|f(x)|\geq e^{-dy_j}/2$.  
\end{lemma}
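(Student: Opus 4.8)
The two facts I have at hand are: $f$ is of exponential type, so $|f(z)|\le Ae^{\tau|z|}$ for some $A,\tau>0$; and $|f(x_j)|\ge e^{-dx_j}$, i.e. $|f|$ is not too small at $x_j$. The goal is to propagate the lower bound from the single point $x_j$ to a whole interval of length $\asymp x_j^{-2}$. The natural tool is a bound on the logarithmic derivative $f'/f$, obtained on a disc on which $f$ does not vanish; the width $x_j^{-2}$ (rather than $x_j^{-1}$) is precisely what one can afford because, as we shall see, the zero-free radius available is only of order $x_j^{-1}$.

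\textbf{The zero-free case.} Suppose first that $f$ does not vanish on $D(x_j,K/x_j)$, where $K>0$ is a small absolute constant to be fixed. Then $\log f$ is holomorphic there; since $\max_{D(x_j,K/x_j)}\log|f|\le \tau x_j+C$ while $\log|f(x_j)|\ge -dx_j$, the Borel--Carath\'eodory inequality applied to $\log f-\log f(x_j)$ (which vanishes at the centre and has real part $\le(\tau+d)x_j+C$) gives $|\log f-\log f(x_j)|\le C'(\tau+d)x_j$ on $D(x_j,K/(2x_j))$, and then Cauchy's estimate yields
$$\Big|\frac{f'}{f}(z)\Big|\le \frac{C''(\tau+d)}{K}\,x_j^2 \qquad\text{on } D\!\left(x_j,\tfrac{K}{4x_j}\right).$$
Integrating along the real segment, for $x\in[x_j,x_j+\delta/x_j^2]$ one gets $\log|f(x)|\ge\log|f(x_j)|-C''(\tau+d)\delta/K$; choosing $\delta$ small enough that $C''(\tau+d)\delta/K\le\log 2$ gives $|f(x)|\ge\tfrac12|f(x_j)|\ge\tfrac12 e^{-dx_j}$, so $y_j:=x_j$ works.

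\textbf{The general case.} Now consider the $x_j$ for which $f$ has a zero in $D(x_j,K/x_j)$. By Jensen's formula, and again using $|f(x_j)|\ge e^{-dx_j}$ and $\max_{D(x_j,2)}|f|\le e^{\tau x_j+C}$, the number of zeros of $f$ in $D(x_j,1)$ is at most $C_1x_j$. Hence the set of $y\in[x_j,x_j+\tfrac12]$ lying within $K/x_j$ of a zero of $f$ has measure at most $2C_1K$, which is $<\tfrac12$ once $K$ is chosen small. Picking $y_j$ in the complement — and, when $x_j$ is pinched between zeros so that $|f(x_j)|$ is itself close to $e^{-dx_j}$, moving $x_j$ slightly in the direction in which $|f|$ increases, so that still $|f(y_j)|\ge e^{-dx_j}\ge e^{-dy_j}$ (the inequality $e^{-dx_j}\ge e^{-dy_j}$ holding because we take $y_j\ge x_j$) — produces $y_j$ with $|y_j-x_j|\le\tfrac12$, hence $y_j\to+\infty$, on which $f$ is zero-free on $D(y_j,K/y_j)$ and $|f(y_j)|\ge e^{-dy_j}$. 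The argument of the previous paragraph, applied at $y_j$, then gives $|f|\ge\tfrac12 e^{-dy_j}$ on $[y_j,y_j+\delta/y_j^2]$.

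\textbf{Main obstacle.} The delicate step is the last one. If $f$ has only a narrow spike of height $e^{-dx_j}$ at $x_j$, then $|f|$ may be far below $e^{-dy_j}$ on most of $[x_j,x_j+\tfrac12]$, so one cannot simply take an arbitrary admissible $y_j$: one really has to follow $\log|f|$ uphill away from $x_j$, using the splitting $f'/f=\sum_{\zeta}(z-\zeta)^{-1}+(\text{a part of size }O(x_j))$ near $x_j$ both to bound how far one must move and to estimate the gain in $|f|$, and one must then redo the bookkeeping of $\delta/y_j^2$ versus $\delta/x_j^2$ for $y_j\neq x_j$ (shrinking $\delta$ if necessary). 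This is where the real work lies; the rest is the clean zero-free estimate above.
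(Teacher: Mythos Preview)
Your approach via Borel--Carath\'eodory and Jensen is genuinely different from the paper's. The paper truncates the Taylor series of $f$ at degree $N_j=\lfloor Mx_j\rfloor$ (using $|a_n|\le CR^n/n!$), shows $|f-P_{N_j}|=o(e^{-dx})$ on $[x_j-1,x_j+1]$, and then takes $y_j$ to be the point where $|f|$ attains its \emph{maximum} on $[x_j-1/j,x_j+1/j]$. This choice automatically gives $|f(y_j)|\ge|f(x_j)|\ge e^{-dx_j}\ge(1-o(1))e^{-dy_j}$. Markov's inequality for real polynomials then yields $|P_{N_j}(x)-P_{N_j}(y_j)|\le N_j^2|x-y_j|\sup|P_{N_j}|\le\tfrac14|f(y_j)|+o(e^{-dy_j})$ for $|x-y_j|\le\alpha/y_j^2$ with $\alpha=1/(8M^2)$, and the conclusion follows. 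No zero-free region is ever needed: the polynomial approximation gives the derivative control $|P_{N_j}'|\lesssim x_j^2\sup|P_{N_j}|\approx x_j^2|f(y_j)|$ directly.

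Your proposal, by contrast, has a real gap at exactly the place you flag. In the general case you need a $y_j$ that simultaneously (a) lies at distance $\ge K/y_j$ from every zero of $f$ and (b) satisfies $|f(y_j)|\ge e^{-dy_j}$. Jensen guarantees most points in $[x_j,x_j+\tfrac12]$ satisfy (a), but your mechanism for (b) --- ``moving $x_j$ in the direction in which $|f|$ increases'' --- is not an argument: if $x_j$ is a local maximum of $|f|$ on the real line then $|f|$ decreases in both directions, yet $x_j$ may still lie within $K/x_j$ of a complex zero. Your route can be completed (e.g.\ via a local Hadamard factorization splitting $f'/f$ into a sum over nearby zeros plus an $O(x_j)$ remainder, and then tracking $\int\Re(f'/f)$ carefully), but it is substantially more work than what you have written. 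The paper's argmax-plus-Markov device sidesteps the whole difficulty: by anchoring at the maximum and controlling oscillation via a degree-$O(x_j)$ polynomial, it never has to know where the zeros are.
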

\begin{proof}
Extracting if necessary, we may and shall assume that $x_j\geq j$ for all $j\geq 1$. Since $f$ is an entire function of exponential type, we may write it
$$f(z)=\sum_{n\geq 0}a_n z^n$$
where the coefficients $a_n$ satisfy $|a_n|\leq CR^n/n!$ for all $n\geq 0$, where $C$ 
and $R$ depend only on $f$,. For $N\geq 1$, we set $P_N(z)=\sum_{n=0}^N a_n z^n$. Let now $j\geq 2$ and $x\in [x_j-1,x_j+1]$. Then
\begin{align*}
|f(x)-P_N(x)|&\leq C\sum_{n=N+1}^{+\infty} |a_n|\cdot |x^n|\\
&\leq C \frac{R^{N+1}(x_j+1)^{N+1}}{(N+1)!}\sum_{n=0}^{+\infty}\frac{R^n (x_j+1)^n}{n!}\\
&\leq C \left(\frac{eR(x_j+1)}{N+1}\right)^{N+1}\exp\big(R(x_j+1)\big)
\end{align*}
by Stirling formula. Let $M\geq 1$ be sufficiently large so that 
$$-M\log(M)+M\log(eR)+R<-d$$
and set $N_j=\lfloor Mx_j\rfloor$. Then
\begin{align*}
|f(x)-P_{N_j}(x)|&\leq C' \exp\big(Mx\log(eR)+Rx-Mx\log(M)\big)\\
&=o(e^{-dx}).
\end{align*}
Now, $|f|$ attains its maximal value on $[x_j-1/j,x_j+1/j]$ at some $y_j$. Let $I_j=[y_j,y_j+\alpha/y_j^2]$ where $\alpha=1/8M^2$. Then, assuming that $x\in I_j$, by Markov's inequality for real polynomials, 
\begin{align*}
|f(x)|&\geq |f(y_j)|-|f(x)-P_{N_j}(x)|-|P_{N_j}(x)-P_{N_j}(y_j)|-|P_{N_j}(y_j)-f(y_j)|\\
&\geq |f(y_j)| -N_j^2 |x-y_j| \sup_{[x_j-1/j,x_j+1/j]}|P_{N_j}|+o(e^{-dy_j})\\
&\geq |f(y_j)|-\frac{1}4\big(|f(y_j)|+o(e^{-dy_j})\big)+o(e^{-dy_j})\\
&\geq \frac 34 |f(y_j)|+o(e^{-dy_j})\\
&\geq \frac 34 e^{-dx_j}+o(e^{-dy_j})\\
&\geq \frac 12 e^{-dy_j}
\end{align*}
provided $j$ is large enough.
\end{proof}

Our next lemma is a fundamental result giving a sufficient condition for a series in a nuclear Fréchet space to be rearrangement universal. It is due to Banaszczyk \cite{Ban90}.

\begin{lemma}\label{lem:rearrangementuniversal}
Let $X$ be a nuclear Fréchet space and let $(x_n)$ be a sequence of elements of $X$ such that 
$\sum_n x_n$ is convergent. Assume moreover that for all $\phi\in X^*$, $\sum_n |\phi(x_n)|=+\infty$. Then for all $x\in X$, there exists a permutation $\sigma$ of $\mathbb N$ such that $x=\sum_n x_{\sigma(n)}$. 
\end{lemma}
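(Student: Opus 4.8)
This is the Lévy–Steinitz rearrangement theorem for nuclear Fréchet spaces, due to Banaszczyk \cite{Ban90}: the hypothesis $\sum_n|\phi(x_n)|=+\infty$ for every non-zero $\phi\in X^*$ is exactly the case in which the space of ``convergence functionals'' $\Gamma_0=\bigl\{\phi\in X^*:\ \sum_n|\phi(x_n)|<+\infty\bigr\}$ reduces to $\{0\}$, so that the sum range $\bigl(\sum_n x_n\bigr)+\Gamma_0^{\perp}$ of the series equals all of $X$. I would organise the proof as follows.

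\textbf{Reduction and representation.} Since $X$ is a nuclear Fréchet space, its topology is generated by an increasing sequence of Hilbertian seminorms $p_1\le p_2\le\cdots$ such that for each $k$ there is $l=l(k)>k$ for which the canonical linking map $X_{l}\to X_{k}$ between the associated local Hilbert spaces is Hilbert–Schmidt, with Hilbert–Schmidt norm $\kappa_k<+\infty$. Replacing $x_1$ by $x_1-\sum_n x_n$, we may assume $\sum_n x_n=0$; fixing $x\in X$, the goal is to produce a permutation whose partial sums converge to $x$ in every seminorm $p_k$.

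\textbf{The dimension-free rounding-off lemma.} The analytic core is a Steinitz-type estimate that is insensitive to the number of vectors involved: there is a universal constant $C$ such that, given finitely many $v_1,\dots,v_m\in X$ with $p_{l(k)}(v_i)\le\veps$ for all $i$ and with $w=\sum_{i=1}^{m}v_i$, there is an ordering $\pi$ of $\{1,\dots,m\}$ for which
\[
p_k\Bigl(\sum_{i=1}^{j}v_{\pi(i)}\Bigr)\le C\,\kappa_k\,\veps+p_k(w)\qquad(1\le j\le m).
\]
In finite dimension $n$ the classical Steinitz bound is of order $n\veps$; the improvement to order $\kappa_k\veps$, independent of $m$ and of any dimension, is precisely where nuclearity enters — a Hilbert–Schmidt map contracts, in a summable way, the directions along which long excursions of the partial sums could build up. Proving this estimate is a delicate greedy induction, at each step choosing the next vector so that the running sum remains near the minimal-$p_k$-norm point of an appropriate convex hull.

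\textbf{Steering from the tail, and diagonalisation.} Because $x_n\to0$, any block of indices drawn from a sufficiently remote tail has uniformly small terms in any prescribed seminorm. The steering ingredient is that for every cofinite $A\subseteq\NN$ the finite sub-block sums $\bigl\{\sum_{n\in F}x_n:\ F\subseteq A\ \text{finite}\bigr\}$ are dense in $X$: if a non-zero $\phi\in X^*$ were bounded above on this set, then $\sum_n\phi(x_n)^{+}<+\infty$, whence (as $\sum_n\phi(x_n)$ converges) $\sum_n|\phi(x_n)|<+\infty$, contradicting the hypothesis; a convexity/Hahn–Banach argument then upgrades this to genuine density. One builds the permutation $\sigma$ block by block: at stage $k$, from the still-unused indices one chooses a finite block whose sum moves the current partial sum to within $2^{-k}$ of $x$ in $p_k$, and then reorders that block by the rounding-off lemma so that all intermediate partial sums stay within $O(2^{-k})$ of $x$ in $p_1,\dots,p_{k-1}$. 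A diagonal argument over $k$ produces a single permutation of $\NN$ whose partial sums converge to $x$ in every $p_k$, hence in the topology of $X$.

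\textbf{Main obstacle.} The whole proof hinges on the dimension-free rounding-off estimate — equivalently, on the fact that the Steinitz constant of a Hilbert–Schmidt (equivalently, nuclear) operator is finite and controlled by its Hilbert–Schmidt norm. This is exactly the step that collapses for general Banach spaces, where the Steinitz constant is infinite and the theorem itself is false, and it is the technical heart of Banaszczyk's argument. A secondary, more bookkeeping-type difficulty is to run the steering step so that the finite blocks simultaneously have small terms and approximately prescribed sums, uniformly enough for the diagonalisation to converge.
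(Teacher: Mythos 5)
The paper does not prove this lemma at all: it is stated as a known result of Banaszczyk and cited to \cite{Ban90}, so there is no ``paper's own proof'' to compare against. What you have written is a high-level outline of Banaszczyk's theorem, and as an outline it captures the right ingredients: the Hilbertian-seminorm/Hilbert--Schmidt-linking representation of a nuclear Fréchet space, the normalization $\sum_n x_n=0$, a dimension-free Steinitz rounding-off estimate controlled by the Hilbert--Schmidt norm of the linking map, and a block-by-block construction with diagonalisation.

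However, you should be clear that this is not a proof. The ``dimension-free rounding-off lemma'' is not a lemma one can wave at; it \emph{is} Banaszczyk's theorem, and you leave it entirely unproved (and you say so). In addition, the density step is stated too quickly. From the hypothesis you correctly deduce that no nonzero $\phi\in X^*$ is bounded above on the set $S_A=\{\sum_{n\in F}x_n:\ F\subseteq A\ \text{finite}\}$, and by Hahn--Banach this gives density of the \emph{closed convex hull} of $S_A$. But $S_A$ is not convex, so ``a convexity/Hahn--Banach argument then upgrades this to genuine density'' is not a self-contained step: to pass from convex combinations to actual sub-block sums, one again needs the rounding-off estimate together with the smallness of the tail terms. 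In other words, the two ``ingredients'' you separate — rounding off and steering — are not independent; the steering step itself secretly invokes the rounding-off lemma. As a survey of how Banaszczyk's argument is organised your sketch is sound, but as a substitute for the citation it supplies neither of the two nontrivial estimates it names.
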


We will be able to apply this lemma because for any strip $\Omega\subset\mathbb C$, $H(\Omega)$ is nuclear as the following lemma indicates (see \cite[p. 499]{Jarbook} for a proof).
\begin{lemma}
For every nonempty open set $\Omega\subset\CC$, the Fréchet space $H(\Omega)$ of all holomorphic functions on $\Omega$ is nuclear.
\end{lemma}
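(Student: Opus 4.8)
The plan is to deduce the general case from the case of a single disc, by invoking the standard permanence properties of the class of nuclear spaces, and to handle the disc directly with Taylor expansions. Recall Grothendieck's criterion: a Fréchet space $X$ with a fundamental increasing system of seminorms $(p_k)$ is nuclear if and only if for every $k$ there is $\ell\geq k$ for which the canonical linking map $\widehat X_\ell\to\widehat X_k$ between the associated local Banach spaces is nuclear, i.e.\ of the form $u=\sum_{j\geq 1}\phi_j(\cdot)\,y_j$ with $\sum_{j\geq 1}\|\phi_j\|\,\|y_j\|<+\infty$. Recall also that subspaces of nuclear spaces, and countable products of nuclear spaces, are again nuclear.

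First I would reduce to discs. Let $(D_j)_{j\geq 1}$ enumerate the open discs with rational centre and rational radius whose closure is contained in $\Omega$; then $\bigcup_{j}D_j=\Omega$. The restriction map $\rho\colon H(\Omega)\to\prod_{j\geq 1}H(D_j)$, $f\mapsto(f|_{D_j})_{j}$, is linear, injective and continuous, and it is a topological embedding: if $K\subset\Omega$ is compact, a compactness argument produces finitely many closed discs $\overline B(z_i,\varepsilon_i)$ covering $K$, each contained in some $D_{j_i}$, so that $\sup_K|f|\leq\max_i\sup_{\overline B(z_i,\varepsilon_i)}|f|$ is controlled by finitely many coordinate seminorms of $\prod_j H(D_j)$. (Its range is moreover closed, being cut out by the compatibility conditions $g_j=g_{j'}$ on $D_j\cap D_{j'}$.) Hence $H(\Omega)$ is isomorphic to a subspace of $\prod_{j\geq 1}H(D_j)$, and it is enough to show that $H(D)$ is nuclear when $D$ is an open disc; by an affine change of variable we may assume $D=\DD$.

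Now consider $H(\DD)$ equipped with the fundamental system $p_r(f)=\sup_{|z|\leq r}|f(z)|$, $0<r<1$, and fix $r<r'<1$. Writing $f=\sum_{k\geq 0}a_k(f)z^k\in H(\DD)$, the Cauchy estimates on the circle of radius $r'$ give $|a_k(f)|\leq (r')^{-k}p_{r'}(f)$; thus each Taylor coefficient functional $a_k$ is $p_{r'}$-continuous with norm $\leq (r')^{-k}$, and so extends to the completion $\widehat X_{r'}$. The operator $u=\sum_{k\geq 0}a_k(\cdot)\,z^k\colon\widehat X_{r'}\to\widehat X_r$, where the monomial $z^k$ viewed in $\widehat X_r$ has norm $p_r(z^k)=r^k$, is well defined and nuclear, since $\sum_{k\geq 0}\|a_k\|\,\|z^k\|\leq\sum_{k\geq 0}(r/r')^{k}<+\infty$. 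On the dense subspace $H(\DD)$ the Taylor partial sums of $f$ converge to $f$ uniformly on $\{|z|\leq r\}$, so $u$ coincides with the canonical linking map $\widehat X_{r'}\to\widehat X_r$; this map is therefore nuclear, Grothendieck's criterion is satisfied, and $H(\DD)$ is nuclear.

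The only point requiring a little care is the reduction step, namely verifying that $\rho$ is a topological embedding so that the permanence properties of nuclearity may be applied. The disc case is then immediate: the Cauchy estimates force the Taylor-coefficient functionals to decay geometrically, which turns the linking maps into nuclear, essentially diagonal, operators.
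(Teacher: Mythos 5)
Your proof is correct. It is worth pointing out that the paper itself does not supply a proof of this lemma at all: it merely cites Jarchow's textbook (\emph{Locally convex spaces}, p.~499), so there is no in-paper argument to compare yours against. What you wrote is a clean, self-contained version of the standard argument: embed $H(\Omega)$ topologically into a countable product $\prod_j H(D_j)$ over rational discs exhausting $\Omega$, invoke stability of nuclearity under countable products and subspaces, and then check Grothendieck's criterion directly on $H(\DD)$ by expanding the linking map $\widehat X_{r'}\to\widehat X_{r}$ in the monomial basis, where the Cauchy estimates give $\|a_k\|\,\|z^k\|\le (r/r')^k$ and hence nuclearity. The two small points you flagged are handled correctly: the covering argument does show $\rho$ is a topological embedding, and the parenthetical about the image being closed is harmless but unnecessary, since nuclearity passes to arbitrary (not just closed) subspaces. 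The identification of $u$ with the canonical linking map via density of $H(\DD)$ in $\widehat X_{r'}$ and uniform convergence of Taylor partial sums on $\{|z|\le r\}$ is also right. In short: a sound proof, filling in a step the paper outsources to a reference.
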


To prove the density of vertical translates of Dirichlet series, we will use the following lemma showing the density of some sums of Dirichlet series. It can be found in \cite[Theorem 6.3.10]{Lau96}.

\begin{lemma}\label{lem:densitysums}
Let $U$ be a simply connected domain of $\mathbb C$ and let $(f_n)$ be a sequence in $H(U)$ satisfying:
\begin{enumerate}[a)]
\item if $\mu$ is a complex Borel measure on $(\mathbb C,\mathcal B(\mathbb C))$ with compact support contained in $U$ such that there exists $r\geq 0$ with $\int_U s^r d\mu\neq 0$, then 
$$\sum_n \left|\int_U f_nd\mu\right|=+\infty;$$
\item the series $\sum_{n\geq 1}f_n$ converges in $H(U)$;
\item for any compact set $K\subset U$, the series $\sum_{n\geq 1}\sup_{s\in K}|f_n(s)|^2$ converges. 
\end{enumerate}
Then the set of all convergent series $\sum_{n\geq 1}z_n f_n$ with $z\in\TT^\infty$	 is dense
in $H(U)$.
\end{lemma}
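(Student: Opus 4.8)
The plan is to deduce the statement from the Hilbert‑space ``conditional convergence'' theorem at the heart of Voronin's method, after descending to an auxiliary scale of Bergman spaces.

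Since $U$ is simply connected, polynomials are dense in $H(U)$ by Runge's theorem, so a compactly supported complex measure $\mu$ on $U$ annihilates $H(U)$ exactly when $\int_U s^r\,d\mu=0$ for all $r\geq 0$; thus condition a) says precisely that $\sum_n|\langle f_n,\mu\rangle|=+\infty$ whenever $\mu$ induces a non‑zero continuous functional on $H(U)$. As the three hypotheses and the conclusion are invariant under precomposition with a conformal map (for a), through this reformulation; for b), c) and the conclusion, clearly), we may and do assume $U=\DD$. For $m\geq 2$ put $G_m=(1-\tfrac1m)\DD$ and let $H_m=A^2(G_m)$ be the Bergman space, a separable Hilbert space in which the monomials $(s^k)_{k\geq 0}$ form an orthogonal basis; write $R_m\colon H(\DD)\to H_m$ for the (continuous, injective) restriction map. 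By c), $\sum_n\|R_mf_n\|_{H_m}^2\leq\pi\sum_n\sup_{\overline{G_m}}|f_n|^2<+\infty$; by b), $\sum_n R_mf_n$ converges in $H_m$; and for $0\neq v\in H_m$ the measure $d\mu_v=\overline v\,dA|_{G_m}$ is compactly supported in $\DD$ with $\langle R_mf_n,v\rangle_{H_m}=\int f_n\,d\mu_v$ and $\langle s^k,v\rangle_{H_m}=\int s^k\,d\mu_v$, which cannot vanish for all $k\geq 0$ (otherwise $v$ would be orthogonal to every monomial, hence zero); so condition a) yields $\sum_n|\langle R_mf_n,v\rangle_{H_m}|=+\infty$.

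Next I invoke, in each $H_m$, the Hilbert‑space rearrangement theorem: if $(u_n)$ in a separable Hilbert space $H$ satisfies $\sum_n\|u_n\|^2<+\infty$, $\sum_n u_n$ converges, and $\sum_n|\langle u_n,v\rangle|=+\infty$ for every $v\neq 0$, then the set of all convergent series $\sum_n z_nu_n$ with $z\in\TT^\infty$ is dense in $H$ (the unimodular form of Pecherski\u{\i}'s theorem, whose real prototype underlies Voronin's original argument). To upgrade ``dense in each $H_m$'' to the desired density in $H(\DD)$, one performs the construction simultaneously over the whole scale, using that $H(\DD)$ embeds as a closed subspace of $\prod_m H_m$ and that convergence in $H(\DD)$ amounts to convergence in every $H_m$: the $z_n$ are chosen in consecutive blocks $n\in(N_{j-1},N_j]$, and after every block the tail $(u_n)_{n>N_j}$ still enjoys the three hypotheses in every $H_m$, so one interleaves a block realizing the prescribed approximation in one $H_{m_0}$ (for a given target compact and $\veps$) with further blocks forcing $\sum_n z_nf_n$ to converge in $H_m$ for all $m$ at once. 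Letting the target range over a countable dense subset of $H(\DD)$ gives the density of $\{\sum_n z_nf_n:\ z\in\TT^\infty\}$.

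The main obstacle is precisely this Hilbert‑space input together with the simultaneous block construction. The unimodular Pecherski\u{\i}‑type theorem is itself the analytically delicate core of Voronin‑style universality, and one must take care that the block argument can be run consistently across all the $H_m$; this is exactly where hypothesis c) enters, since the square‑summability it provides on every compact subset of $\DD$ is what makes the rearranged series converge in $H(\DD)$ rather than merely in one auxiliary Hilbert space.
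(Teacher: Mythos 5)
The paper does not prove this lemma; it is quoted from Laurin\v cikas's monograph (Theorem 6.3.10), so there is no internal proof to compare against. Your proposal reconstructs precisely the argument from that reference: conformal reduction to the disc, a scale of Bergman spaces $A^2(G_m)$, a Pecherski\u{\i}-type Hilbert-space rearrangement theorem adapted for unimodular weights, and a gluing step passing from density in each $A^2(G_m)$ to density in the Fr\'echet space $H(U)$. The reduction to $U=\DD$ (correct because polynomials are dense in $H(U)$ for simply connected $U$, so a measure annihilating all monomials $\phi(s)^r$ already annihilates $H(\DD)$), the identification of nonzero functionals on $A^2(G_m)$ with compactly supported measures not annihilating all monomials, and the verification that hypotheses a)--c) descend to each $H_m$ are all handled correctly.

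The two ingredients you invoke without proof --- the unimodular Hilbert-space theorem and the interleaved block construction --- are the substantive content of the cited proof, and you rightly flag them as the main obstacle; in a self-contained treatment they would need to be supplied. The block construction in particular needs care beyond what you sketch: an approximation achieved in one $H_{m_0}$ must be extendable to a single $z\in\TT^\infty$ making $\sum_n z_nf_n$ converge in \emph{every} $H_m$ simultaneously (not merely in $H_{m_0}$), and it is exactly here that hypothesis c) and the fact that the Hilbert-space theorem allows one to extend any finite partial specification of the $z_n$ must be combined. As written this is a correct high-level outline rather than a complete proof, and it coincides with the route of the source the paper relies on.
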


We shall use the following lemma on exponential sums (see \cite[p. 206]{IK04}).
\begin{lemma}\label{lem:vdc}
 Let $a<b$, let $f,g$ be two $\mathcal C^2$-functions and let $\alpha,\beta\in\mathbb R,\ \veps\in(0,1)$. We assume that $f'$ is monotonic with $\alpha\leq f'\leq\beta$. Then 
$$\sum_{n=a}^b g(n)e^{2\pi i f(n)}=\sum_{\alpha-\veps<m<\beta+\veps}\int_a^b g(x)e^{2\pi i (f(x)-mx)}dx+O\left(G\left(\veps^{-1}+\log(\beta-\alpha+2)\right)\right)$$
where the implied constant is absolute and $G=|g(b)|+\int_a^b |g'(y)|dy$.
\end{lemma}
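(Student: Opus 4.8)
The plan is to recognise Lemma~\ref{lem:vdc} as a version of \emph{truncated Poisson summation} --- the analytic input of van der Corput's ``$B$-process'' --- and to prove it along classical lines: pass to the dual sum, separate the frequencies $m$ according to whether the phase $f(x)-mx$ can be stationary on $[a,b]$, keep the resonant ones as main terms, and estimate the rest by the first-derivative test. A preliminary normalisation is convenient: replacing $f(x)$ by $f(x)-\nu x$ for an integer $\nu$ changes neither $e^{2\pi i f(n)}$ at integers $n$ nor (after the harmless reindexing $m\mapsto m-\nu$) the main terms or the shape of the error, so the statement is unchanged, and choosing $\nu$ to be the nearest integer to $(\alpha+\beta)/2$ and replacing $\alpha,\beta$ by $\alpha-\nu,\beta-\nu$ accordingly I may assume $|\alpha+\beta|\le1$, so that $\sup_{[a,b]}|f'|\le\max(|\alpha|,|\beta|)\le\beta-\alpha+2$. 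The gain is that the dual sum can then be truncated at a height $M\asymp\beta-\alpha$ still large enough to contain every resonant frequency $m\in(\alpha-\veps,\beta+\veps)$; this is exactly what turns the bound into $\log(\beta-\alpha+2)$ rather than $\log\max(|\alpha|,|\beta|)$.

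The main body is the truncated Poisson summation formula for $\phi(x):=g(x)e^{2\pi i f(x)}$ on $[a,b]$, which I would obtain from Euler--Maclaurin with the sawtooth $\psi(x)=\{x\}-\tfrac12$ and the truncated Fourier expansion of $\psi$ at height $M$, in the form
$$\sum_{a\le n\le b}g(n)e^{2\pi i f(n)}=\sum_{|m|\le M}\int_a^b g(x)e^{2\pi i(f(x)-mx)}\,dx+E.$$
Here the error $E$ gathers: endpoint contributions, which are $O(\sup_{[a,b]}|\phi|)=O(\sup_{[a,b]}|g|)=O(G)$ because $\sup_{[a,b]}|g|\le|g(b)|+\int_a^b|g'|=G$; the $g'$-part of $\phi'$, contributing $\ll\int_a^b|g'|\le G$; and a sawtooth-tail term, handled by the standard estimate $|r_M(x)|\ll\min(1,(M\|x\|)^{-1})$ for the partial Fourier sums of $\psi$ together with one integration by parts that removes the factor $f'$ from the integrand via $2\pi i f'e^{2\pi i f}=\frac{d}{dx}e^{2\pi i f}$. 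As explained in the final paragraph, $E=O\bigl(G(\veps^{-1}+\log(\beta-\alpha+2))\bigr)$.

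It then remains to decide which $m$ with $|m|\le M$ are genuine main terms. For $\alpha-\veps<m<\beta+\veps$ --- all such $m$ satisfying $|m|\le M$ by the normalisation --- the integral $\int_a^b g e^{2\pi i(f-mx)}$ is retained, and these are precisely the terms on the right-hand side of the lemma. For every other $m$ with $|m|\le M$ one has $d_m:=\operatorname{dist}(m,[\alpha,\beta])\ge\veps$, and since $f'-m$ is of constant sign and monotone on $[a,b]$, the \textbf{first-derivative test} for exponential integrals yields $\bigl|\int_a^b g(x)e^{2\pi i(f(x)-mx)}\,dx\bigr|\ll G/d_m$. Summing over these $m$: the at most two integers lying just outside $(\alpha-\veps,\beta+\veps)$ may have $d_m$ as small as $\veps$, contributing $\ll G/\veps$; for all the others $d_m$ takes values $\asymp 1,2,\dots,M$, contributing $\ll G\sum_{k\le M}k^{-1}\ll G\log(\beta-\alpha+2)$. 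Hence transferring the non-resonant integrals into $E$ costs $O\bigl(G(\veps^{-1}+\log(\beta-\alpha+2))\bigr)$, and the lemma follows.

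The genuinely delicate point --- the main obstacle --- is the clean estimate of $E$, and in particular of the sawtooth-tail term: the naïve first-derivative bound summed over all $m$ diverges, and the naïve bound for the sawtooth tail brings in the uncontrolled quantity $\int_a^b|g|$ (hence a spurious factor $b-a$), so one is forced into the extra integration by parts above and must then exploit the monotonicity of $f'$, in the guise $\int_a^b|f''|=|f'(b)-f'(a)|\le\beta-\alpha$, to trade those factors for the admissible $G$. All the other ingredients --- the normalisation, the Euler--Maclaurin/Poisson skeleton, the first-derivative test --- are classical; for the truncated Poisson summation formula itself one may simply appeal to \cite{IK04}.
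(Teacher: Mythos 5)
The paper offers no proof of Lemma~\ref{lem:vdc}: it simply cites \cite[p.~206]{IK04}, and your final sentence ("for the truncated Poisson summation formula itself one may simply appeal to \cite{IK04}") lands on the same reference, so at the top level you and the paper agree. Your surrounding sketch of the classical argument is also structurally on target: the normalisation $f\mapsto f-\nu x$ to bound $\sup|f'|$ by $O(\beta-\alpha+2)$ and thereby truncate at $M\asymp\beta-\alpha+2$; keeping the $m\in(\alpha-\veps,\beta+\veps)$ as main terms; and disposing of the non-resonant $m$ with $|m|\le M$ by the first-derivative test, with the worst two contributing $\ll G/\veps$ and the remainder $\ll G\sum_{k\le M}k^{-1}\ll G\log(\beta-\alpha+2)$. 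You also correctly flag that the monotonicity of $f'$ must enter through $\int_a^b|f''|=|f'(b)-f'(a)|\le\beta-\alpha$.

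However, the mechanism you propose for the sawtooth-tail term $\int_a^b r_M(x)\phi'(x)\,dx$ does not work as stated, and this is exactly the step you identify as the main obstacle. Writing $2\pi i f'e^{2\pi i f}\,dx=d\bigl(e^{2\pi i f}\bigr)$ and integrating $\int r_M\,g\,d\bigl(e^{2\pi i f}\bigr)$ by parts produces $-\int e^{2\pi i f}g\,dr_M$, and $dr_M=D_M(x)\,dx-d\lfloor x\rfloor$ since $r_M=\psi-S_M\psi$ has the jumps of $\psi$ at integers while $S_M'\psi=1-D_M$. The atomic part regenerates $\sum_n g(n)e^{2\pi i f(n)}$ (the identity becomes circular), and the absolutely continuous part has total variation $\asymp(b-a)\log M$, so neither your $|r_M|\ll\min(1,(M\|x\|)^{-1})$ bound nor a factor $G$ emerges. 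The step that actually works is different: expand $r_M$ in its Fourier tail $\sum_{|m|>M}(-2\pi i m)^{-1}e(mx)$, so that (after the harmless $g'$-piece and the endpoint Abel-sum giving $O(G)$) the remaining contribution is $-\sum_{|m|>M}m^{-1}\int_a^b g f'\,e(f+mx)\,dx$, and apply the first-derivative test to each summand with amplitude $gf'$, whose total variation is $\ll\sup|f'|\int|g'|+\sup|g|\int|f''|\ll G(\beta-\alpha+2)$ precisely because $f'$ is monotone; the phase satisfies $|f'+m|\ge|m|-M$, and $\sum_{|m|>M}\frac{G(\beta-\alpha+2)}{|m|(|m|-M)}\ll G\log(\beta-\alpha+2)$ once $M\asymp\beta-\alpha+2$. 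So the $\int|f''|$ you correctly anticipate enters through the variation factor in the first-derivative test applied \emph{after} Fourier-expanding $r_M$, not through the integration by parts $2\pi i f'e^{2\pi i f}=\frac{d}{dx}e^{2\pi i f}$ you describe. Since both you and the author ultimately delegate to IK04, this flaw is not fatal to the paper, but your sketch of the underlying estimate is incorrect at its critical point.
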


To evaluate some mean squares of Dirichlet polynomials, we will use the classical Montgomery-Vaughan inequality (\cite{MV74}):

\begin{lemma}\label{lem:mv}
 Let $n\geq 1$, let $\lambda_1,\cdots,\lambda_n$ be distinct real numbers and let $u_1,\dots,u_n$ be complex numbers. Then 
 $$\left|\sum_{r\neq s}\frac{\overline{u_r}u_s}{\lambda_r-\lambda_s}\right|\leq \frac{3\pi}2\sum_{r=1}^n |u_r|^2\delta_r^{-1}$$
 where $\delta_r=\min_{s\neq r}|\lambda_r-\lambda_s|$.
\end{lemma}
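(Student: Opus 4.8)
The plan is to prove this directly; it is the Montgomery--Vaughan generalization of Hilbert's inequality, and the natural route is to compare the bilinear form with the Hilbert transform on $L^2(\RR)$. Relabelling, we may assume $\lambda_1<\cdots<\lambda_n$, so that $\delta_r=\min(\lambda_r-\lambda_{r-1},\,\lambda_{r+1}-\lambda_r)$ with the obvious conventions at $r=1,n$. Two elementary facts will be used repeatedly: for $r\neq s$ one has $|\lambda_r-\lambda_s|\geq\max(\delta_r,\delta_s)\geq(\delta_r\delta_s)^{1/2}$, and the intervals $I_r:=[\lambda_r-\delta_r/2,\,\lambda_r+\delta_r/2]$ are pairwise disjoint apart from at most a common endpoint.

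The main step is to smear the point masses. Set $\phi_r=\delta_r^{-1}\chi_{I_r}$ ($\chi_{I_r}$ the indicator of $I_r$) and $\Phi=\sum_r u_r\phi_r\in L^1(\RR)\cap L^2(\RR)$, and let $\mathcal Hf(x)=\mathrm{p.v.}\int_\RR f(t)(x-t)^{-1}\,dt$ be the (unnormalised) Hilbert transform, for which $\|\mathcal Hf\|_2=\pi\|f\|_2$. Expanding $\langle\Phi,\mathcal H\Phi\rangle$ and using that each $\phi_r$ is real and constant on $I_r$, the diagonal contributions $\iint_{I_r\times I_r}\phi_r(x)\phi_r(y)(x-y)^{-1}\,dx\,dy$ vanish by antisymmetry in $(x,y)$, so that
\begin{equation*}
\langle\Phi,\mathcal H\Phi\rangle=\sum_{r\neq s}\overline{u_r}u_s\,c_{rs},\qquad c_{rs}:=\iint_{I_r\times I_s}\frac{\phi_r(x)\phi_s(y)}{x-y}\,dx\,dy,
\end{equation*}
the off-diagonal integrals being absolutely convergent since the $I_r$ meet in at most a point. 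By Cauchy--Schwarz, $\|\mathcal H\|=\pi$ and the disjointness of the supports,
$$\bigl|\langle\Phi,\mathcal H\Phi\rangle\bigr|\leq\pi\|\Phi\|_2^2=\pi\sum_r|u_r|^2\|\phi_r\|_2^2=\pi\sum_r|u_r|^2\delta_r^{-1},$$
which is the main term.

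It remains to bound the error $S-\langle\Phi,\mathcal H\Phi\rangle=\sum_{r\neq s}\overline{u_r}u_s\,e_{rs}$, where $e_{rs}:=(\lambda_r-\lambda_s)^{-1}-c_{rs}$ is a real antisymmetric array (here $S:=\sum_{r\neq s}\overline{u_r}u_s(\lambda_r-\lambda_s)^{-1}$). Reading $c_{rs}$ as $\mathbb E\bigl[(X_r-X_s)^{-1}\bigr]$ with $X_r$ uniform on $I_r$ (so $\mathbb E[X_r]=\lambda_r$), convexity of $t\mapsto 1/t$ and a second-order expansion give, for $s$ outside the (at most two) nearest neighbours of $r$,
$$|e_{rs}|=\Bigl|\mathbb E\bigl[(X_r-X_s)^{-1}\bigr]-\bigl(\mathbb E[X_r-X_s]\bigr)^{-1}\Bigr|\lesssim\frac{\operatorname{Var}(X_r-X_s)}{|\lambda_r-\lambda_s|^{3}}=\frac{\delta_r^2+\delta_s^2}{12\,|\lambda_r-\lambda_s|^{3}},$$
the nearest neighbours being treated by a crude separate estimate. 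The remaining point is to prove $\sum_{s\neq r}|e_{rs}|\leq\tfrac{\pi}{2}\delta_r^{-1}$; granting it, $|\overline{u_r}u_s|\leq\tfrac12(|u_r|^2+|u_s|^2)$ and the symmetry of $(|e_{rs}|)$ give $\bigl|\sum_{r\neq s}\overline{u_r}u_s e_{rs}\bigr|\leq\sum_r|u_r|^2\sum_{s\neq r}|e_{rs}|\leq\tfrac{\pi}{2}\sum_r|u_r|^2\delta_r^{-1}$, and adding this to the main term yields $|S|\leq\tfrac{3\pi}{2}\sum_r|u_r|^2\delta_r^{-1}$.

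The delicate part --- and the heart of Montgomery and Vaughan's argument --- is exactly the estimate $\sum_{s\neq r}|e_{rs}|\leq\tfrac{\pi}{2}\delta_r^{-1}$: a termwise bound of $\sum_{s\neq r}(\delta_r^2+\delta_s^2)|\lambda_r-\lambda_s|^{-3}$ does not suffice, since for nearly equally spaced $\lambda_j$ that series diverges logarithmically, so one has to use the actual arrangement of the $\lambda_s$ about $\lambda_r$ --- grouping indices by the size of $|\lambda_r-\lambda_s|$ and exploiting that consecutive gaps telescope --- while tracking the constant carefully. (Alternatively, one may optimise the choice of the densities $\phi_r$, or compare $S$ to a Hilbert transform truncated to the frequency band set by the spacings.) Everything else is either the standard $L^2$ bound for the Hilbert transform or an elementary computation with the explicit densities $\phi_r$.
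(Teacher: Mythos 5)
The paper does not prove this lemma at all: it is stated as a classical result and cited directly from Montgomery and Vaughan's paper \cite{MV74}, so there is no in-paper argument to compare yours against.

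Your write-up is a proof \emph{outline}, not a proof, and you say so yourself. The reduction is set up correctly and cleanly: smearing each point mass to a uniform density $\phi_r=\delta_r^{-1}\chi_{I_r}$ on a gap-length interval, invoking the $L^2$ bound $\|\mathcal H\|=\pi$ together with disjointness of the $I_r$ to get the main term $\pi\sum_r|u_r|^2\delta_r^{-1}$, and then bounding the antisymmetric error array $e_{rs}=(\lambda_r-\lambda_s)^{-1}-c_{rs}$ by the row-sum estimate $\sum_{s\neq r}|e_{rs}|\leq\frac{\pi}{2}\delta_r^{-1}$ --- all of that is consistent and would indeed deliver the constant $\frac{3\pi}{2}$. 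But the row-sum estimate is precisely where all the difficulty of the Montgomery--Vaughan theorem lives, and you do not prove it. You observe (correctly) that the second-order bound $|e_{rs}|\lesssim(\delta_r^2+\delta_s^2)|\lambda_r-\lambda_s|^{-3}$ is of no use termwise because the resulting sum can diverge logarithmically for nearly equally spaced $\lambda_j$, and then you describe, in a sentence, the \emph{kind} of argument that is needed (group indices by the size of $|\lambda_r-\lambda_s|$, exploit telescoping of consecutive gaps, track the constant) without carrying any of it out. ``Granting it'' and ``one has to use the actual arrangement of the $\lambda_s$'' are exactly the places a referee would stop. As it stands, the heart of the lemma --- the one nontrivial estimate, including the value $\frac{\pi}{2}$ of the constant --- is asserted, not established, so the argument has a genuine gap and does not constitute a proof.
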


\section{Proof of Theorem \ref{thm:rearrangement}}\label{SEC:PROOF}

By Lemma \ref{lem:rearrangementuniversal}, we just need to prove that for all $\phi\in H(\Omega)^*$, $\sum_n |\phi(a_n e^{-\lambda_ns})|=+\infty$. By Hahn-Banach and Riesz theorems, there exist a compact set $K\subset \Omega$ and 
$\mu\in\mathcal M(\Omega)$ such that, for all $f\in H(U)$, $\phi(f)=\int_K fd\mu$. Let us set $b=\max(\Re e(s):z\in K)<\sigma_a(D)$ and let us denote
$$\mathcal L_\mu(z)=\int_K e^{-zs}d\mu(s).$$
One has to prove $\sum_n |a_n|\cdot |\mathcal L_\mu(\lambda_n)|=+\infty$. We first observe that $\mathcal L_\mu$ is not identically zero. Indeed, $\mathcal L_\mu^{(n)}(0)=(-1)^n \phi(s^n)$ and the 
polynomials are dense in $H(\Omega)$. It is easy to check that $\mathcal L_\mu$ is an entire function of exponential type. Moreover, Lemma 11.15 of \cite{BM09} says that
$$\limsup_{x\to+\infty}\frac{\log |\mathcal L_\mu(x)|}{x}\geq -b.$$
Let $\beta\in (0,(\sigma_a(D)-b)/2)$. There exists a sequence of real numbers $(x_j)$, tending to $+\infty$, such that, for all $j\geq 1$,
$$|\mathcal L_\mu(x_j)|\geq e^{-(b+\beta)x_j}.$$

Let $\alpha$ and $(y_j)$ be defined by Lemma \ref{lem:exponentialgrowth}. Then, for all $j\geq 1$ sufficiently large 
\begin{align*}
\sum_n  |a_n| \cdot |\mathcal L_\mu(\lambda_n)|&\geq \frac 12 e^{-(b+\beta)y_j}\sum_{\lambda_n \in [y_j,y_j+\alpha/y_j^2]} |a_n|\\
&\geq C_{\alpha,\beta} e^{-(b+\beta)y_j}e^{(\sigma_a(D)-\beta)y_j}\\
&\geq C_{\alpha,\beta} e^{(\sigma_a(D)-b-2\beta)y_j}.
\end{align*}
The right handside of this inequality may be choosen as large as we want, which ends up the proof.

\begin{example}
Let $D(s)=\sum_n (-1)^{n}n^{-s}$. Then $D$ is rearrangement universal.
\end{example}

\begin{proof}
We just write, for any $\alpha,\beta,x>0$, 
\begin{align*}
\sum_{n:\ \log(n)\in \left[x,x+\frac\alpha{x^2}\right]}1&\geq C\left(e^{x+\frac{\alpha}{x^2}}-e^x\right)\\
&\geq C'\frac{e^x}{x^2}\\
&\geq C'' e^{(1-\beta)x}.
\end{align*}
\end{proof}

\begin{example}\label{ex:primerearrangement}
Let $D(s)=\sum_n (-1)^n p_n^{-s}$. Then $D$ is rearrangement universal.
\end{example}

\begin{proof}
Using Hadamard - De la Vallée Poussin estimate,
$$\Pi(x)=\int_2^x \frac{du}{\log u}+O(xe^{-c\sqrt{\log x}}),$$
we write, for any $\alpha,\beta,x>0$, 
\begin{align*}
\sum_{n:\ \log(p_n)\in \left[x,x+\frac\alpha{x^2}\right]}1&= \int_{e^x}^{e^{x+\frac\alpha{x^2}}}\frac{dx}{\log x}+O\left(e^{x+\frac{\alpha}{x^2}}e^{-c\sqrt{\log\left(x+\frac\alpha{x^2}\right)}}\right)\\
&\geq \frac{e^x \left(e^{\frac\alpha{x^2}}-1\right)}{x+\frac\alpha{x^2}}+O\left(e^{x+\frac{\alpha}{x^2}}e^{-c\sqrt{\log\left(x+\frac\alpha{x^2}\right)}}\right)\\
&\geq C\frac{e^x}{x^2}\\
&\geq C' e^{(1-\beta)x}.
\end{align*}
\end{proof}

\section{Convergence of a family of measures}\label{SEC:CVMEASURE}

Let $D=\sum_n a(n)e^{-\lambda(n)s}$ be a Dirichlet series with $\sigma_2(D)<+\infty$
and let $\Omega$ be the half-plane $\Omega=\{\Re e(s)>\sigma_2(D)\}$.
Define for $z$ belonging to the infinite polycircle $\TT^\infty$, $s\in \Omega$ and $N\geq 1$, 
$$D(s,z)=\sum_{n=1}^{+\infty}a(n)z_n e^{-\lambda(n)s},\ D_N(s,z)=\sum_{n=1}^{N}a(n)z_ne^{-\lambda(n)s}.$$
Then for any $s\in\Omega$,  $(D_N(s,\cdot))$ is a martingale in $(\TT^\infty, \mathcal B(\TT^\infty),m_\infty)$ where 
$m_\infty$ is the Haar measure on $\TT^\infty$, since $(z_n)$ is a sequence of independent variables with mean $0$. Moreover, for $s=\sigma+it\in \Omega$,
$$\mathbb E(|D_N(s,\cdot)|^2)\leq \sum_{n=1}^{+\infty}|a(n)|^2 e^{-2\lambda(n)\sigma}<+\infty$$
since $\sigma>\sigma_2(D)$. By Doob's theorem, $(D_N(s,z))_N$ converges for almost all $z\in\TT^\infty$. This implies that, for all $\sigma_1>\sigma_2(D)$, 
the Dirichlet series $D(\cdot,z)$ converges uniformly on all compact subsets of $\{\Re e(s)>\sigma_1\}$ for almost all $z\in\TT^\infty$. Taking a countable intersection, this implies that for almost all $z\in\TT^\infty$, $D(\cdot,z)$ converges 
uniformly on all compact subsets of $\Omega$. Therefore, $D(\cdot,z)$ defines an $H(\Omega)$-valued random element on $(\TT^{\infty},\mathcal B(\TT^\infty),m_\infty)$.
We will denote by $P_{D}$ be the distribution of $D(\cdot,z)$, namely
$$P_{D}(A)=m_\infty\left(\left\{z\in\TT^\infty:\ D(s,z)\in A\right\}\right),\ A\in\mathcal B(H(\Omega)).$$

We intend on the one hand to study the support of $P_D$ and on the other hand to link it with some probability measure associated to translates of $D$.
We need to introduce a definition:
\begin{definition}
Let $\sigma_0\in\mathbb R$. We say that a Dirichlet series $D(s)=\sum_{n}a(n)e^{-\lambda(n)s}$ with finite abscissa of convergence belongs to $\dwa(\sigma_0)$ provided
\begin{enumerate}
\item it extends holomorphically to $\Omega=\{\Re e(s)>\sigma_0\}$;
\item $\sigma_2(D)\leq\sigma_0$;
\item for all $\sigma_1>\sigma_0$, there exists $A,B>0$ such that, for all $s=\sigma+it$
with $\sigma\geq\sigma_1$, $|D(\sigma+it)|\leq A+|t|^B$;
\item for all $\sigma_2>\sigma_1>\sigma_0$, 
$$\sup_{\sigma\in[\sigma_1,\sigma_2]}\sup_{T>0}\frac 1T\int_0^T |D(\sigma+it)|^2dt<+\infty;$$
\item the sequence $(\lambda(n))$ is $\mathbb Q$-linearly independent. 
\end{enumerate}
\end{definition}

Let $D=\sum_{n}a(n)e^{-\lambda(n)s}\in\dwa(\sigma_0)$ and let $\Omega=\{\Re e(s)>\sigma_0\}$. Then for all $\tau\in\mathbb R$, $D(\cdot+i\tau)\in H(\Omega)$
the space of holomorphic functions on $\Omega$. For $T>0$,  we define a probability measure
$\nu_{T,D}$ on $(H(\Omega),\mathcal B(H(\Omega)))$ by
\begin{equation*}\label{eq:nutd}
 \nu_{T,D}(A)=\frac 1T\textrm{meas}\left(\left\{\tau\in[0,T]:\ D(\cdot+i\tau)\in A\right\}\right),\ A\in\mathcal B(H(\Omega)).
\end{equation*}

The following result was proved in \cite{Duy12} (see also \cite{GLS06}).
\begin{theorem}\label{thm:duy}
Let $\sigma_0\in\mathbb R$ and let $D\in\dwa(\sigma_0)$. Then the probability measure
$\nu_{T,D}$ converges weakly to $P_D$ as $T\to+\infty$.
\end{theorem}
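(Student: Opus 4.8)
The plan is to carry out the classical Bagchi--Laurin\v{c}ikas scheme for probabilistic limit theorems of Dirichlet series, reducing the assertion to a Kronecker--Weyl equidistribution theorem on $\TT^\infty$ by a double approximation. Fix a metric $\rho$ inducing the topology of the separable complete metric space $H(\Omega)$, say $\rho(f,g)=\sum_{k\ge 1}2^{-k}\min\big(1,\sup_{K_k}|f-g|\big)$ for an exhaustion $(K_k)$ of $\Omega$ by compact sets. Write $u_n=e^{\lambda(n)}$, so that $D(s)=\sum_n a(n)u_n^{-s}$, and for $Y>0$ introduce the smoothed series
$$D_Y(s)=\sum_{n\ge 1}a(n)u_n^{-s}e^{-u_n/Y},$$
which converges absolutely and locally uniformly on $\CC$; denote by $D_Y(s,z)=\sum_n a(n)z_n u_n^{-s}e^{-u_n/Y}$ its random model on $(\TT^\infty,m_\infty)$ and by $\nu_{T,D_Y}$, $P_{D_Y}$ the laws of $\tau\mapsto D_Y(\cdot+i\tau)$ on $[0,T]$ and of $z\mapsto D_Y(\cdot,z)$. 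The proof rests on three facts: \textbf{(A)} for each fixed $Y$, $\nu_{T,D_Y}\to P_{D_Y}$ weakly as $T\to\infty$; \textbf{(B)} $P_{D_Y}\to P_D$ weakly as $Y\to\infty$; \textbf{(C)} for every $\veps>0$,
$$\lim_{Y\to\infty}\ \limsup_{T\to\infty}\ \frac1T\,\mathrm{meas}\Big\{\tau\in[0,T]:\ \rho\big(D(\cdot+i\tau),D_Y(\cdot+i\tau)\big)\ge\veps\Big\}=0.$$
Granting (A)--(C), a standard approximation lemma for weak convergence of probability measures on a metric space (in the spirit of Billingsley's Theorem~4.2) yields $\nu_{T,D}\to P_D$ weakly.

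For \textbf{(A)}: since $(\lambda(n))$ is $\QQ$-linearly independent (condition~(5) of $\dwa(\sigma_0)$), for every non-trivial character $\chi(z)=\prod_n z_n^{m_n}$ of $\TT^\infty$ one has $\frac1T\int_0^T\chi\big((e^{-i\lambda(n)\tau})_n\big)\,d\tau=\frac1T\int_0^T e^{-i(\sum_n m_n\lambda(n))\tau}\,d\tau\to 0$, so $\tau\mapsto(e^{-i\lambda(n)\tau})_{n\ge 1}$ is equidistributed in $\TT^\infty$ with respect to $m_\infty$; since $D_Y$ converges absolutely, the map $z\mapsto D_Y(\cdot,z)$ is a uniform limit over $\TT^\infty$ of the polynomial maps $z\mapsto\sum_{n\le N}a(n)z_n u_n^{-s}e^{-u_n/Y}$, hence continuous from $\TT^\infty$ to $H(\Omega)$, and $D_Y(\cdot+i\tau)$ is its value at $(e^{-i\lambda(n)\tau})_n$; the continuous mapping theorem gives (A). For \textbf{(B)}: at a fixed $s$ with $\Re s>\sigma_0\ge\sigma_2(D)$, orthonormality of $(z_n)$ in $L^2(\TT^\infty,m_\infty)$ gives
$$\mathbb E\big|D_Y(s,z)-D(s,z)\big|^2=\sum_{n\ge 1}|a(n)|^2 u_n^{-2\Re s}\big(1-e^{-u_n/Y}\big)^2,$$
which tends to $0$ as $Y\to\infty$ by dominated convergence; integrating over a slightly larger compact set and using the sub-mean-value inequality for the subharmonic function $|D_Y(\cdot,z)-D(\cdot,z)|^2$, one obtains $\mathbb E\,\rho\big(D_Y(\cdot,z),D(\cdot,z)\big)\to 0$, and hence $P_{D_Y}\to P_D$ weakly.

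The heart of the matter is \textbf{(C)}. By Chebyshev's inequality, Cauchy's integral formula and the definition of $\rho$, it suffices to show that, uniformly for $\sigma_1$ in compact subsets of $(\sigma_0,+\infty)$,
$$\lim_{Y\to\infty}\ \limsup_{T\to\infty}\ \frac1T\int_0^T\big|D(\sigma_1+it)-D_Y(\sigma_1+it)\big|^2\,dt=0.$$
Using the Mellin identity $e^{-x}=\frac1{2\pi i}\int_{(\theta)}\Gamma(w)x^{-w}\,dw$ ($\theta>0$) together with absolute convergence of $\sum_n|a(n)|u_n^{-\sigma_1-\theta}$ when $\sigma_1+\theta>\sigma_a(D)$, one gets $D_Y(s)=\frac1{2\pi i}\int_{(\theta)}D(s+w)\,\Gamma(w)\,Y^{w}\,dw$. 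Shift the line of integration to $\Re w=\theta'$ with $\sigma_0<\sigma_1+\theta'<\sigma_1$; this is legitimate because condition~(3) bounds $D$ polynomially in vertical strips lying to the right of $\sigma_0$, while $|\Gamma(w)|$ decays exponentially in $|\Im w|$, so the horizontal segments of the shifting rectangle vanish. Crossing the poles of $\Gamma$ at $w=0,-1,\dots,-k_0$ (with $k_0<\sigma_1-\sigma_0$, so that $D(s-k)$ is defined for each such~$k$), the residue at $w=0$ equals $D(s)$ while the residue at $w=-k$ is $\frac{(-1)^k}{k!}Y^{-k}D(s-k)$, whose mean-square average over $t\in[0,T]$ is $O(Y^{-2k})$ by condition~(4); hence
$$D(\sigma_1+it)-D_Y(\sigma_1+it)=-\frac1{2\pi i}\int_{(\theta')}D(\sigma_1+it+w)\,\Gamma(w)\,Y^{w}\,dw\ +\ O(Y^{-1}),$$
the $O(Y^{-1})$ denoting a term of $L^2(dt/T)$-norm $O(Y^{-1})$. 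Parametrising $w=\theta'+iv$ and applying Minkowski's integral inequality, $\big(\frac1T\int_0^T|D(\sigma_1+it)-D_Y(\sigma_1+it)|^2dt\big)^{1/2}$ is at most
$$O(Y^{-1})+Y^{\theta'}\int_{\RR}|\Gamma(\theta'+iv)|\Big(\frac1T\int_0^T\big|D(\sigma_1+\theta'+i(t+v))\big|^2\,dt\Big)^{1/2}\,dv.$$
Since $\sigma_1+\theta'>\sigma_0$, condition~(4) bounds the inner mean value uniformly in $T$ and in $v$ (the shift $t\mapsto t+v$ alters the averaging window only by $O(|v|)$, negligible as $T\to\infty$), while $\int_{\RR}|\Gamma(\theta'+iv)|\,dv<\infty$ absorbs the $v$-integral. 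Choosing $\theta'<0$ uniformly for $\sigma_1$ in a compact subinterval of $(\sigma_0,+\infty)$, the right-hand side is $O(Y^{-c})\to 0$ as $Y\to\infty$; this proves (C).

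The single genuinely delicate point is step (C): one must place the shifted line $\Re w=\theta'$ simultaneously to the left of the origin---so that the pole of $\Gamma$ there is crossed and produces exactly the main term $D(s)$---and far enough to the right that $\sigma_1+\theta'$ still exceeds $\sigma_0$, so that $D$ remains holomorphic, polynomially bounded and of bounded mean square along that line; this is precisely why conditions~(3) and~(4) are built into the definition of $\dwa(\sigma_0)$. The other ingredients---Kronecker--Weyl equidistribution, the continuous mapping theorem, and the orthonormality of the coordinate functions on $\TT^\infty$---are routine.
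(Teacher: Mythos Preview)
The paper does not prove this theorem; it is quoted as a known result from \cite{Duy12} (see also \cite{GLS06}), so there is no in-paper argument to compare with. Your proof is exactly the one carried out in those references---smooth by a Mellin-inverted exponential kernel, use Kronecker--Weyl equidistribution on $\TT^\infty$ (via condition~(5)) for the absolutely convergent approximant $D_Y$, and close the gap by a contour-shifted mean-square estimate exploiting conditions~(3) and~(4)---and it is correct. One small remark: condition~(4) is stated only for $\int_0^T$, so when $v<0$ in your Minkowski step the portion of the integral over negative imaginary parts is not covered directly; this is harmless since condition~(3) gives a polynomial bound there, which is absorbed by the exponential decay of $\Gamma$ along the vertical line.
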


Our first task is to exhibit Dirichlet series belonging to $\dwa(\sigma_0)$ by only looking at the behaviour of the sequences of its coefficients and frequencies.

\begin{theorem}\label{thm:dirichletseriesindwa}
Let $\sigma_0\in\mathbb R$, let $D(s)=\sum_{n=1}^{+\infty}a(n)e^{-\lambda(n)s}$ satisfying the following assumptions:
\begin{itemize}
\item[$\diamond$] $\sigma_2(D)\leq \sigma_0$;
\item[$\diamond$] for all $n\geq 1$, $a(n)=\rho(n)e^{i\omega n}$ for some $\omega\notin 2\pi\mathbb Z$ and $\rho(n)\geq 0$;
\item[$\diamond$] $\lambda,\rho:[1,+\infty)\to [0,+\infty)$ are $\mathcal C^2$-functions with $\lambda$ increasing and tending to $+\infty$ and $\lambda'$ nonincreasing and tending to $0$;
\item[$\diamond$] for all $\sigma>\sigma_0$, the function $x\mapsto \rho(x)e^{-\lambda(x)\sigma}$ is nonincreasing;
\item[$\diamond$] the sequence $(\lambda(n))$ is $\mathbb Q$-linearly independent.
\end{itemize}
Then $D$ belongs to $\dwa(\sigma_0)$. 
\end{theorem}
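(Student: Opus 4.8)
The plan is to verify the five defining properties of $\dwa(\sigma_0)$ one by one. Properties (2) (i.e. $\sigma_2(D)\le\sigma_0$) and (5) ($\mathbb Q$-linear independence of $(\lambda(n))$) are hypotheses, so nothing is to be done there. The real work lies in (1) holomorphic continuation to $\Omega=\{\Re e(s)>\sigma_0\}$, (3) the polynomial vertical growth bound, and (4) the uniform second-moment bound; and all three will be obtained from a single device: writing, for $\sigma>\sigma_0$, the tail of the Dirichlet series as an exponential sum $\sum_n \rho(n)e^{i\omega n}e^{-\lambda(n)s}$ and applying the van der Corput-type Lemma \ref{lem:vdc} with $f(x)=\frac{\omega x}{2\pi}-\frac{t\lambda(x)}{2\pi}$ and $g(x)=\rho(x)e^{-\lambda(x)\sigma}$. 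The hypothesis that $\lambda'$ is nonincreasing and tends to $0$ guarantees that $f'(x)=\frac{\omega}{2\pi}-\frac{t\lambda'(x)}{2\pi}$ is monotonic in $x$ and, for $n\ge N(t)$ large enough, stays in a fixed small neighbourhood of $\frac{\omega}{2\pi}$ that (since $\omega\notin2\pi\mathbb Z$) contains no integer; the hypothesis that $x\mapsto\rho(x)e^{-\lambda(x)\sigma}$ is nonincreasing makes $g\ge 0$ decreasing, so the quantity $G=|g(b)|+\int_a^b|g'|$ in Lemma \ref{lem:vdc} telescopes to essentially $2g(a)=2\rho(a)e^{-\lambda(a)\sigma}$, which is $O(e^{-\lambda(a)(\sigma-\sigma_0)})$ by property (2), hence exponentially small in the lower summation limit.

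Concretely: fix $\sigma_1>\sigma_0$. For $t$ with $|t|$ large, choose $N=N(|t|)$ so that for $n\ge N$ one has $\frac{|t|\lambda'(n)}{2\pi}<\delta$, where $\delta=\frac14\,\mathrm{dist}(\frac{\omega}{2\pi},\mathbb Z)>0$; since $\lambda'(x)\to0$, one can take $N(|t|)$ growing only polynomially in $|t|$ (indeed, how slowly is irrelevant — any bound suffices for (3)). Split $D(s)=\sum_{n<N}a(n)e^{-\lambda(n)s}+\sum_{n\ge N}a(n)e^{-\lambda(n)s}$. The head is an entire function, trivially holomorphic, with the obvious polynomial-in-$|t|$ bound (a sum of $N(|t|)$ bounded terms on $\Re e(s)\ge\sigma_1$). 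For the tail, apply Lemma \ref{lem:vdc} on each dyadic block $[M,2M]$, $M\ge N$: with $\alpha=\beta=\frac{\omega}{2\pi}$ (after the monotonicity reduction $f'$ lies in an interval of length $<2\delta<1$ about $\frac{\omega}{2\pi}$), the range $\alpha-\veps<m<\beta+\veps$ with $\veps=\delta$ contains \emph{no integer} $m$, so the main term of Lemma \ref{lem:vdc} is empty and the block sum is $O\big(G(\delta^{-1}+\log 3)\big)=O(\rho(M)e^{-\lambda(M)\sigma})=O(e^{-\lambda(M)(\sigma_1-\sigma_0)})$; summing the geometrically decaying dyadic contributions gives that the tail converges uniformly on $\{\Re e(s)\ge\sigma_1\}$ to a holomorphic function, with a bound independent of $t$ beyond the polynomial factor coming from $N(|t|)$. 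This simultaneously yields (1) (letting $\sigma_1\downarrow\sigma_0$ and using that holomorphy is local) and (3) (the tail is $O(1)$, the head is $O(\mathrm{poly}(|t|))$).

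For (4), the uniform mean-square bound on $[\sigma_1,\sigma_2]$, I would again split at $N=N(|t|)$, but now integrate in $t$ over $[0,T]$. For the tail, the bound just obtained is $O(1)$ uniformly in $t$ and in $\sigma\in[\sigma_1,\sigma_2]$, so $\frac1T\int_0^T|\text{tail}|^2\,dt=O(1)$ with no effort. For the head $D_{N(t)}(\sigma+it)$ — whose length varies with $t$ — I would instead bound $\frac1T\int_0^T|D_M(\sigma+it)|^2\,dt$ for a \emph{fixed} truncation length $M$ using the Montgomery–Vaughan inequality (Lemma \ref{lem:mv}) applied to the distinct reals $\lambda(n)$, $n\le M$: this gives $\frac1T\int_0^T|D_M(\sigma+it)|^2dt\le\sum_{n\le M}|a(n)|^2e^{-2\lambda(n)\sigma}+O\big(\sum_{n\le M}|a(n)|^2e^{-2\lambda(n)\sigma}\delta_n^{-1}\big)$ where $\delta_n$ is the gap to the nearest frequency. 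The first sum is $\le\sum_n|a(n)|^2e^{-2\lambda(n)\sigma_1}<\infty$; the issue is the $\delta_n^{-1}$, but one only needs a \emph{fixed} $M=M(\sigma_1,\sigma_2)$ (chosen so the tail is small uniformly) rather than $M\to\infty$, so $\delta_n^{-1}$ contributes only a fixed constant. The cleanest organization: pick $M_0$ with $\sum_{n>M_0}|a(n)|^2e^{-2\lambda(n)\sigma_1}$ as small as desired; on $\Re e(s)\in[\sigma_1,\sigma_2]$ with $|t|\ge$ some threshold the van der Corput argument shows $D(\sigma+it)-D_{M_0}(\sigma+it)=O(1)$, and for $|t|$ below the threshold $D(\sigma+it)$ is bounded on the compact set; then $\frac1T\int_0^T|D(\sigma+it)|^2dt\le \frac2T\int_0^T|D_{M_0}|^2dt+O(1)$ and Montgomery–Vaughan finishes it with constants depending only on $\sigma_1,\sigma_2,M_0$.

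The main obstacle I anticipate is bookkeeping around the $t$-dependent truncation point $N(|t|)$: one must make sure that (i) for (3) the head $D_{N(|t|)}$ really does grow only polynomially, which forces a quantitative lower bound on the decay of $\lambda'$ — but in fact no such quantitative hypothesis is available, so the right move is to \emph{not} truncate at a $t$-dependent point for the growth estimate but rather to observe that for $\Re e(s)\ge\sigma_1>\sigma_0$ the series is dominated by $\sum_n|a(n)|e^{-\lambda(n)\sigma_1}$ only if $\sigma_1>\sigma_a(D)$, which need not hold; so one genuinely needs the oscillation. The clean resolution is: the van der Corput estimate on dyadic blocks $[M,2M]$ is valid for \emph{all} $M\ge N(|t|)$ with $N(|t|)=\min\{M:\ |t|\lambda'(M)<2\pi\delta\}$, giving tail $=O(e^{-\lambda(N(|t|))(\sigma_1-\sigma_0)})$ which is $O(1)$; and the head $\sum_{n<N(|t|)}|a(n)|e^{-\lambda(n)\sigma}\le N(|t|)\cdot\max_n|a(n)|e^{-\lambda(1)\sigma}$ is polynomial in $|t|$ provided $N(|t|)$ is — and here one uses that $\lambda'\to0$ together with $\lambda$ increasing to deduce $\lambda'(x)\ge c/\lambda^{-1}(\cdot)$-type bounds are \emph{not} needed, because even $N(|t|)$ growing faster than any polynomial is incompatible with $\sigma_c(D)<\sigma_a(D)$ — wait, more carefully, one invokes that $\sigma_2(D)<\infty$ which by the Bohr–Cahen formula forces $\lambda(n)\ge c\log n$, hence $\lambda'$ cannot decay faster than $c/n$ on average, hence $N(|t|)=O(|t|^{C})$. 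Sorting out exactly this implication cleanly — that $\sigma_2(D)<\infty$ plus $\lambda'$ nonincreasing forces $N(|t|)$ polynomial — is where I would spend the most care.
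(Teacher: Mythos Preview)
Your plan for the mean-square estimate (4) has a genuine gap. In your ``cleanest organization'' you fix $M_0$ and claim that for $|t|$ above some threshold the van der Corput argument gives $D(\sigma+it)-D_{M_0}(\sigma+it)=O(1)$. The opposite is true: with $f(u)=\frac{\omega u-t\lambda(u)}{2\pi}$ one has $f'(u)=\frac{\omega}{2\pi}-\frac{t\lambda'(u)}{2\pi}$, and for the interval $(\alpha-\veps,\beta+\veps)$ in Lemma~\ref{lem:vdc} to contain no integer one needs $|t|\lambda'(M_0)$ \emph{small}. Thus for fixed $M_0$ the tail bound is valid only for $|t|\le 2\pi\delta/\lambda'(M_0)$, i.e.\ on a bounded range of $t$, and fails precisely for large $|t|$. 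A fixed truncation cannot work; the truncation must grow with $T$.

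The paper does exactly this: it chooses $x=x(T)$ by the relation $T\lambda'(x)=\frac12\,\mathrm{dist}(\omega/2\pi,\mathbb Z)$, so that the van der Corput estimate applies to the tail $\sum_{n\ge x}$ for every $t\in[0,T]$ simultaneously, giving $|R(x,\sigma+it)|\le C(\omega)\rho(x)e^{-\lambda(x)\sigma}=O(1)$. For the head $D_{x-1}$ one applies Montgomery--Vaughan; the diagonal contributes $T\sum_n\rho^2(n)e^{-2\lambda(n)\sigma}=O(T)$, but the off-diagonal term is
\[
\sum_{n\le x}\frac{\rho^2(n)e^{-2\lambda(n)\sigma}}{\delta_n}\ \asymp\ \sum_{n\le x}\frac{\rho^2(n)e^{-2\lambda(n)\sigma}}{\lambda'(n)},
\]
and now $x\to\infty$ with $T$, so you cannot absorb $\delta_n^{-1}$ into a constant as you propose. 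The missing idea is an Abel summation: writing $S(n)=\sum_{k\le n}\rho^2(k)e^{-2\lambda(k)\sigma}$ (bounded, since $\sigma_2(D)\le\sigma_0$) and using that $1/\lambda'$ is nondecreasing, the sum telescopes to $O(S(x)/\lambda'(x))=O(1/\lambda'(x))=O(T)$ by the very choice of $x$. This coupling between the truncation point and the gap size is the heart of the argument and is absent from your plan.

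For (1) and (3) you are working much harder than needed, and your own flagged obstacle is real: nothing in the hypotheses forces $N(|t|)$ to be polynomial (e.g.\ $\lambda'(x)\sim 1/\log x$ is compatible with all the assumptions and gives $N(|t|)$ exponential). The paper bypasses this entirely. For (1), on the real axis $\sigma>\sigma_0$ the series $\sum_n e^{i\omega n}\cdot\rho(n)e^{-\lambda(n)\sigma}$ converges by Dirichlet's test (bounded partial sums of $e^{i\omega n}$ since $\omega\notin2\pi\mathbb Z$; $\rho(n)e^{-\lambda(n)\sigma}$ nonincreasing and $\to0$ since $\sigma_2(D)\le\sigma_0$), so $\sigma_c(D)\le\sigma_0$ and holomorphy in $\Omega$ is automatic. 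Then (3) is the classical fact (Hardy--Riesz) that a general Dirichlet series has finite order in any closed half-plane strictly inside its half-plane of convergence; no van der Corput is needed.
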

\begin{proof}
Assumptions (1), (2) and (5) of the definition of $\dwa(\sigma_0)$ are immediately satisfied. (3) follows from the fact that a Dirichlet series has finite order in its half-plane of convergence (see \cite[Therem 12]{HarRi} for instance). Let us prove the delicate part, namely (4). Let $\sigma_2>\sigma_1>\sigma_0$, let $\sigma\in[\sigma_1,\sigma_2]$ and let $t\geq 0$. For $1\leq x\leq N$ and $s=\sigma+it$, we write
\begin{equation}\label{eq:dsdwa1}
D(s)=D_{x-1}(s)+\sum_{n=x}^N a(n)e^{-\lambda(n)s}+\sum_{n=N+1}^{+\infty}a(n)e^{-\lambda(n)s}
\end{equation}
where $D_y(s)=\sum_{n\leq y}a(n)e^{-\lambda(n)s}$ denotes the partial sum of $D$. 
We shall first estimate 
$$\sum_{n=x}^N a(n)e^{-\lambda(n)s}=\sum_{n=x}^N \rho(n)e^{-\lambda(n)\sigma}e^{i(\omega n-t \lambda(n))}.$$
One intends to apply Lemma \ref{lem:vdc} to 
\begin{align*}
 g(u)&=\rho(u)e^{-\lambda(u)\sigma}\\
 f(u)&=\frac 1{2\pi}\big(\omega u- t \lambda(u)\big).
\end{align*}
To do this, we choose $x\geq 1$ so that 
\begin{equation}\label{eq:dsdwa2}
t\lambda'(x)\leq \frac 12\textrm{dist}\left(\frac{\omega}{2\pi},\mathbb Z\right).
\end{equation}
Then for all $u\in[x,N]$, we get
$$\frac{\omega}{2\pi}\geq f'(u)=\frac{\omega}{2\pi}-t\lambda'(u)\geq\frac{\omega}{2\pi}-\frac 12\textrm{dist}\left(\frac{\omega}{2\pi},\mathbb Z\right).$$
This leads us to set $\alpha=\frac\omega{2\pi}-\frac 12\textrm{dist}\left(\frac{\omega}{2\pi},\mathbb Z\right)$, $\beta=\frac\omega{2\pi}$ and $\veps=\frac 14\textrm{dist}\left(\frac{\omega}{2\pi},\mathbb Z\right)$, so that the sum appearing in Lemma \ref{lem:vdc} is empty. Moreover, since $g$ is assumed to be nonincreasing and positive,
$$|g(N)|+\int_x^N |g'(y)|dy\leq g(x).$$
Thus, letting $N$ to $+\infty$ in \eqref{eq:dsdwa1}, we find that
$$D(s)=D_{x-1}(s)+R(x,s)$$
with 
$$|R(x,s)|\leq C(\omega)\rho(x)e^{-\lambda(x)\sigma}$$
for those $t$ and $x$ satisfying \eqref{eq:dsdwa2} (in this proof, the notation $C(x_1,\dots,x_p)$ will denote a positive function depending only on the parameters $x_1,\dots,x_p$, 
 whose value may change from line to line). 

Let now $T>0$ be large and let $x\geq 1$ be such that $T\lambda'(x)=\frac 12\textrm{dist}\left(\frac{\omega}{2\pi},\mathbb Z\right)$ so that \eqref{eq:dsdwa2} is satisfied for all $t\in[0,T]$. In particular,
\begin{align*}
\frac 1T\int_0^T |R(x,\sigma+it)|^2 dt&\leq C(\omega)\rho^2(x)e^{-2\lambda(x)\sigma}\\
&\leq C(\omega)\rho^2(1)e^{-2\lambda(1)\sigma}\\
&\leq C(D,\sigma_1).
\end{align*}
Let us turn to $\int_0^T |D_{x-1}(\sigma+it)|^2 dt$. For technical reasons, we write
$$D_{x-1}(s)=a(1)e^{-\lambda(1)s}+\sum_{n=2}^{x-1}a(n)e^{-\lambda(n)s}.$$

We use the Montgomery-Vaughan theorem  to get 
$$\begin{array}{l}
\displaystyle \int_{0}^T \left|\sum_{n=2}^{x-1} a(n)e^{-\lambda(n)s}\right|^2 dt\\
\displaystyle\quad\quad \leq 
T\sum_{n=2}^{x-1}\rho^2(n)e^{-2\lambda(n)\sigma}+
\left|\sum_{2\leq n\neq m\leq x-1}\!\frac{a(n)e^{-\lambda(n)\sigma}\overline{a(m)}e^{-\lambda(m)\sigma}}{\lambda_n-\lambda_m}\left(e^{(\lambda(n)-\lambda(m))iT}-1\right)\right|\\
\displaystyle \quad\quad\leq  T\sum_{n=1}^{+\infty}\rho^2(n)e^{-2\lambda(n)\sigma}+{3\pi}
\sum_{n=2}^{x-1}\frac{\rho^2(n)e^{-2\lambda(n)\sigma}}{|\lambda(n)-\lambda(n-1)|}\\
\displaystyle \quad\quad\leq T\sum_{n=1}^{+\infty}\rho^2(n)e^{-2\lambda(n)\sigma}+{3\pi}
\sum_{n=2}^{x-1}\frac{\rho^2(n)e^{-2\lambda(n)\sigma}}{\lambda'(n)}
\end{array}$$
so that we finally obtain
$$\int_{0}^T |D_{x-1}(\sigma+it)|^2dt\leq 4T \sum_{n=1}^{+\infty}\rho^2(n)e^{-2\lambda(n)\sigma}+6\pi \sum_{n=1}^{x-1}\frac{\rho^2(n)e^{-2\lambda(n)\sigma}}{\lambda'(n)}.$$
We handle the last sum by summing by parts, setting $S(n)=\sum_{k=1}^n \rho^2(k)e^{-2\lambda(k)\sigma}$ so that
\begin{align*}
\sum_{n=1}^{x-1}\frac{\rho^2(n)e^{-2\lambda(n)\sigma}}{\lambda'(n)}
&\leq \sum_{n=1}^{x-1}S(n)\left(\frac{1}{\lambda'(n)}-\frac{1}{\lambda'(n+1)}\right)+S(x-1)\frac{1}{\lambda'(x)}.
\end{align*}
Now, since $\lambda'$ is nonincreasing and since $(S(n))$ is bounded by some constant $C(D,\sigma_1)$, we find
\begin{align*}  
\sum_{n=1}^{x-1}\frac{\rho^2(n)e^{-2\lambda(n)\sigma}}{\lambda'(n)}&\leq C(D,\sigma_1)\sum_{n=1}^{x-1}\left(\frac1{\lambda'(n+1)}-\frac 1{\lambda'(n)}\right)+\frac{C(D,\sigma_1)}{\lambda'(x)}\\
&\leq \frac{C(D,\sigma_1)}{\lambda'(x)}\\
&\leq C(D,\sigma_1) T.
\end{align*}
Finally we have shown as needed that, for all $\sigma\in [\sigma_1,\sigma_2]$, 
$$\frac 1T\int_0^T |D(\sigma+it)|^2 dt\leq C(D,\sigma_1).$$
\end{proof}

\section{Support of a measure and density of translations}\label{SEC:SUPPORT}

The next step in our work is to identify the support of the measure $P_D$ introduced in the previous section. We recall that if $S$ is a separable topological space and $P$ is a probability measure on $(S,\mathcal B(S))$, the support of $P$ is defined by 
$$S_P=\bigcap_{\substack{C\mathrm{ closed} \\ P(C)=1}}C.$$
The support $S_P$ consists of all $x\in S$ such that, for every open neighbourhood $V$ of $x$, $P(V)>0$. Now, let $X$ be an $S$-valued random element defined on a certain probability space. Then the support of the distribution $P(X\in A)$, $A\in\mathcal B(S)$, is called the support of the random variable $X$ and is denoted by $S_X$. 

The following lemma (see \cite[Thm 1.7.10]{Lau96}) will provide us with the support of $P_D$:
\begin{lemma}\label{lem:support}
Let $U$ be a simply connected domain of $\mathbb C$ and let $(X_n)$ be a sequence of independent $H(U)$-valued random variable defined on the same probability space. Assume that the series $\sum_n X_n$ converges almost surely. Then the support of the sum of this series is the closure of the set of all $f\in H(U)$ which may be
written as a convergent series
$$f=\sum_{n=1}^{+\infty}f_n,\ f_n\in S_{X_n}.$$
\end{lemma}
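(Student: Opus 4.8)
The plan is to set $S=\sum_{n\ge1}X_n$ (the almost surely convergent sum), to let $P$ denote its distribution on $H(U)$, and to introduce
\[
A=\Big\{\sum_{n\ge1}f_n\ :\ f_n\in S_{X_n}\text{ for every }n,\ \text{the series converging in }H(U)\Big\},
\]
so that what must be shown is $S_P=\overline A$. Throughout I would use only that $H(U)$ is a separable Fréchet space whose topology is generated by the seminorms $p_K(g)=\sup_{s\in K}|g(s)|$ for $K$ in a compact exhaustion of $U$ (simple connectedness plays no role here), together with two elementary facts: (a) a Borel probability measure on a separable metric space gives full mass to its support, since the complement of the support is a countable union of null open balls; and (b) each $p_K$ is subadditive, so a sum of $N$ terms lies within $\veps$ of the sum of $N$ prescribed targets as soon as each term lies within $\veps/N$ of its target.

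For the inclusion $S_P\subseteq\overline A$ I would show $P(\overline A)=1$. By fact (a), $\mathbb P(X_n\in S_{X_n})=1$ for every $n$, hence almost surely $X_n\in S_{X_n}$ for all $n$ simultaneously; intersecting with the almost sure event that $\sum_nX_n$ converges, one gets that almost surely $S\in A$. Thus $\mathbb P(S\in\overline A)=1$, and since $\overline A$ is closed it contains $S_P$ by the very definition of the support.

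For $\overline A\subseteq S_P$, since $S_P$ is closed it suffices to prove $A\subseteq S_P$. Fix $f=\sum_{n\ge1}f_n\in A$ with $f_n\in S_{X_n}$, a compact $K\subset U$ and $\veps>0$; the goal is $\mathbb P(p_K(S-f)<\veps)>0$. Write $R_N=\sum_{n>N}X_n$: the hypothesis that $\sum_nX_n$ converges almost surely forces $R_N\to0$ almost surely, hence $p_K(R_N)\to0$ in probability, so one may fix $N$ with $p_K\big(f-\sum_{n\le N}f_n\big)<\veps/3$ and $\mathbb P\big(p_K(R_N)<\veps/3\big)\ge\tfrac12$. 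On the event
\[
E=\big\{p_K(R_N)<\veps/3\big\}\cap\bigcap_{n=1}^{N}\big\{p_K(X_n-f_n)<\veps/(3N)\big\}
\]
fact (b) gives $p_K(S-f)\le p_K(R_N)+\sum_{n\le N}p_K(X_n-f_n)+p_K\big(\sum_{n\le N}f_n-f\big)<\veps$. Moreover $R_N$ is an almost sure pointwise limit of the partial tails $\sum_{N<n\le M}X_n$, so the first event is $\sigma(X_n:n>N)$-measurable and therefore independent of $X_1,\dots,X_N$; hence $\mathbb P(E)$ factors as $\mathbb P\big(p_K(R_N)<\veps/3\big)\prod_{n=1}^N\mathbb P\big(p_K(X_n-f_n)<\veps/(3N)\big)$, and each factor is positive — the first by the choice of $N$, the others because $f_n\in S_{X_n}$. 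Thus $\mathbb P(p_K(S-f)<\veps)\ge\mathbb P(E)>0$, so $f\in S_P$, and $\overline A\subseteq S_P$.

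The only delicate point will be the lower bound $\mathbb P(E)>0$: it rests on combining the independence of the tail $R_N$ from the initial segment $X_1,\dots,X_N$ with the fact that almost sure convergence of $\sum_nX_n$ makes $p_K(R_N)$ small with probability close to $1$ for large $N$. Everything else is routine bookkeeping with the seminorms $p_K$ and with the definition of the support of a measure.
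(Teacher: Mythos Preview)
Your proof is correct. The paper does not actually supply its own proof of this lemma; it merely cites \cite[Thm 1.7.10]{Lau96}. The argument you give---showing $P(\overline A)=1$ for the inclusion $S_P\subset\overline A$, and for the reverse inclusion splitting $S-f$ into a tail $R_N$, a finite sum $\sum_{n\le N}(X_n-f_n)$, and a remainder of the target series, then using independence of $R_N$ from $X_1,\dots,X_N$ together with the positivity coming from $f_n\in S_{X_n}$---is the standard one and is essentially what appears in the cited reference. You are also right that simple connectedness of $U$ plays no role here; it is used elsewhere in the paper (for instance in Lemma~\ref{lem:densitysums}), not in this lemma.
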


We next introduce a class of Dirichlet series for which the support of $P_D$ will be the whole space $H(U)$. 

\begin{definition}
Let $\sigma_0\in\mathbb R$. We say that a Dirichlet series $D=\sum_n a(n)e^{-\lambda(n)s}$ belongs to $\ddens(\sigma_0)$ provided 
\begin{enumerate}
\item $\sigma_2(D)\leq \sigma_0<\sigma_a(D)$;
\item for all $\alpha,\beta>0$, there exist $C>0$ and $x_0\geq 1$ such that, for all $x\geq x_0$, 
$$\sum_{\lambda(n)\in \left[x,x+\frac\alpha{x^2}\right]}|a(n)|\geq Ce^{(\sigma_a(D)-\beta)x}.$$
\end{enumerate}
\end{definition}

Let $D=\sum_{n}a(n)e^{-\lambda(n)s}\in\ddens(\sigma_0)$ and let $U$ be the strip $\{\sigma_0<\Re e(s)<\sigma_a(D)\}$.
As mentioned above, $D(s,z)=\sum_{n\geq 1}a(n)z_n e^{-\lambda(n)s}$ defines an $H(U)$-valued random element on $(\TT^\infty,\mathcal B(\TT^\infty),m_\infty)$.
We denote by $P_{D,U}$ its distribution which is the restriction of $P_D$ to $H(U)$.

\begin{theorem}\label{thm:support}
Let $\sigma_0\in\RR$, let $D\in\ddens(\sigma_0)$ and let $U$ be the strip $\{\sigma_0<\Re e(s)<\sigma_a(D)\}$. Then the support
of $P_{D,U}$ is $H(U)$.
\end{theorem}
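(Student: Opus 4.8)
The plan is to apply Lemma \ref{lem:support} to the sequence of independent $H(U)$-valued random variables $X_n(z) = a(n) z_n e^{-\lambda(n)s}$. Since each $z_n$ is Haar-distributed on $\TT$, the support $S_{X_n}$ is exactly $\{c\, a(n) e^{-\lambda(n)s}:\ |c|=1\}$, i.e. the circle of radius $|a(n)|$ (in the topology of $H(U)$) spanned by the single function $a(n)e^{-\lambda(n)s}$. Thus, by Lemma \ref{lem:support}, $S_{P_{D,U}}$ is the closure in $H(U)$ of the set of all functions representable as a convergent series $\sum_n z_n a(n) e^{-\lambda(n)s}$ with $z\in\TT^\infty$. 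So the theorem reduces to showing that this set of convergent $\TT^\infty$-rearranged-phase series is \emph{dense} in $H(U)$.

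To establish that density, I would invoke Lemma \ref{lem:densitysums} with $f_n(s) = a(n)e^{-\lambda(n)s}$ on the simply connected domain $U$. I must verify its three hypotheses. Hypothesis (b), convergence of $\sum_n f_n$ in $H(U)$, holds because $U\subset\{\Re e(s)>\sigma_0\}=\Omega$ and we showed in Section \ref{SEC:CVMEASURE} (via the martingale/Doob argument, or simply because $\sigma_c(D)\le \sigma_0$ — note $\sigma_2(D)\le\sigma_0$ forces nothing about $\sigma_c$ directly, but $D$ converges on $\Omega$ by the distributional setup with $z=(1,1,\dots)$, which is the generic... more carefully: $\sum_n |a(n)|^2 e^{-2\lambda(n)\sigma}<\infty$ for $\sigma>\sigma_0$ gives convergence of $\sum f_n$ on $U$ by Cauchy-Schwarz against $\sum e^{-2\lambda(n)\sigma}$ which converges since $\sigma_c(D)<\sigma_a(D)$ implies $\lambda(n)$ grows slowly... actually one should just cite that $\sigma_c(D)\le\sigma_0$, which follows since the genuine Dirichlet series $D$ is assumed to converge on $\{\Re e(s)>\sigma_0\}$ — this is implicit in $D\in\ddens(\sigma_0)$ being a Dirichlet series with the stated strip). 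Hypothesis (c), convergence of $\sum_n \sup_{s\in K}|a(n)e^{-\lambda(n)s}|^2$ for compact $K\subset U$, follows from $\sigma_2(D)\le\sigma_0$: on $K$ we have $\Re e(s)\ge\sigma_1>\sigma_0\ge\sigma_2(D)$, so $\sup_K |a(n)e^{-\lambda(n)s}|^2 \le |a(n)|^2 e^{-2\lambda(n)\sigma_1}$, which is summable.

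The main obstacle is hypothesis (a) of Lemma \ref{lem:densitysums}: if $\mu$ is a compactly supported complex Borel measure on $U$ with $\int_U s^r\,d\mu \neq 0$ for some $r\ge 0$, then $\sum_n \left|\int_U a(n)e^{-\lambda(n)s}\,d\mu(s)\right| = +\infty$. Writing $\mathcal L_\mu(z) = \int_U e^{-zs}\,d\mu(s)$, this is exactly the statement $\sum_n |a(n)|\cdot|\mathcal L_\mu(\lambda(n))| = +\infty$. This is precisely the estimate carried out in the proof of Theorem \ref{thm:rearrangement} in Section \ref{SEC:PROOF}! Indeed, $\mathcal L_\mu$ is entire of exponential type; the condition $\int_U s^r\,d\mu\neq 0$ guarantees $\mathcal L_\mu\not\equiv 0$ (since $\mathcal L_\mu^{(r)}(0) = (-1)^r\int s^r d\mu \neq 0$); Lemma 11.15 of \cite{BM09} gives $\limsup_{x\to\infty}\frac{\log|\mathcal L_\mu(x)|}{x}\ge -b$ where $b = \max\{\Re e(s):s\in\mathrm{supp}(\mu)\} < \sigma_a(D)$; then Lemma \ref{lem:exponentialgrowth} produces an interval $[y_j, y_j+\alpha/y_j^2]$ on which $|\mathcal L_\mu(x)|\ge \tfrac12 e^{-(b+\beta)y_j}$; and finally condition (2) of $\ddens(\sigma_0)$ gives $\sum_{\lambda(n)\in[y_j,y_j+\alpha/y_j^2]}|a(n)|\ge C e^{(\sigma_a(D)-\beta)y_j}$, so for $\beta$ small enough the product tends to $+\infty$. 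Hence hypothesis (a) holds, Lemma \ref{lem:densitysums} applies, the set of convergent $\TT^\infty$-series $\sum z_n f_n$ is dense in $H(U)$, and therefore $S_{P_{D,U}} = \overline{\{\text{such series}\}} = H(U)$.
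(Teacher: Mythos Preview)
Your approach is exactly the paper's: apply Lemma \ref{lem:support} to identify $S_{P_{D,U}}$ as the closure of the convergent series $\sum_n z_n a(n)e^{-\lambda(n)s}$, then invoke Lemma \ref{lem:densitysums}, with hypothesis (a) reduced to the Laplace-transform estimate already done in Section \ref{SEC:PROOF}. Hypotheses (a) and (c) are handled correctly.

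There is, however, a real gap in your verification of hypothesis (b). You need $\sum_n f_n = \sum_n a(n)e^{-\lambda(n)s}$ to converge in $H(U)$, i.e.\ $\sigma_c(D)\le\sigma_0$. But membership in $\ddens(\sigma_0)$ only gives $\sigma_2(D)\le\sigma_0<\sigma_a(D)$, and $\sigma_2(D)\le\sigma_0$ does \emph{not} imply $\sigma_c(D)\le\sigma_0$ in general. Your attempts to rescue this---Cauchy--Schwarz, ``the distributional setup with $z=(1,1,\dots)$'', or ``implicit in $D\in\ddens(\sigma_0)$''---do not work: the Doob argument in Section \ref{SEC:CVMEASURE} shows that $D(\cdot,z)$ converges on $\Omega$ for \emph{almost every} $z\in\TT^\infty$, and the specific point $z=(1,1,\dots)$ need not lie in that full-measure set.

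The paper's fix is a one-line twist you should adopt: pick some $w\in\TT^\infty$ in that full-measure set, so that $\sum_n a(n)w_n e^{-\lambda(n)s}$ does converge on $\{\Re e(s)>\sigma_0\}$, and apply Lemma \ref{lem:densitysums} with $f_n(s)=a(n)w_n e^{-\lambda(n)s}$ instead. Since $|w_n|=1$, hypotheses (a) and (c) are unchanged, hypothesis (b) now holds by the choice of $w$, and the set of convergent series $\sum_n z_n f_n$ with $z\in\TT^\infty$ is the same set of convergent series $\sum_n z'_n a(n)e^{-\lambda(n)s}$ with $z'\in\TT^\infty$ (via $z'_n=z_n w_n$). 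With this adjustment your proof is complete and coincides with the paper's.
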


\begin{proof}
Let $w\in\TT^\infty$ be such that the series $D(\cdot,w)$ converges in $\{\Re e(s)>\sigma_0)$. We apply Lemma \ref{lem:support} with $X_n(z)=a(n)z_n w_n e^{-\lambda(n)s}$, $z\in\TT^\infty$. The sequence $(X_n)$ is a sequence of independent random variables on $(\TT^\infty,\mathcal B(\TT^\infty),m_\infty)$. The support of each $X_n$ is the set 
$\left\{a(n)\xi e^{-\lambda(n)s}:\ \xi\in\TT\right\}$. Therefore, the support of $P_{D,U}$ is the closure of all convergent series $\sum_{n=1}^{+\infty}a(n)z_n e^{-\lambda(n)s}$, with $z\in\TT^\infty$. One has to show that this latter set is the whole space $H(U)$. This is done 
by applying Lemma \ref{lem:densitysums} with $f_n=a(n)w_n e^{-\lambda(n)s}$. Indeed, conditions b) and c) are satisfied because $\sigma_0\geq \sigma_2(D)$ and because of the choice of $w$. 
Regarding a), we consider $\mu$ a complex measure with compact support $K$ contained in $U$ and such that there exists $r\geq 0$ with $\int_U s^r d\mu\neq 0$. Let $\mathcal L_\mu$
be its Laplace transform, $\mathcal L_\mu(z)=\int_U e^{-sz}d\mu$. $\mathcal L_\mu$ is nonzero since $\mathcal L_\mu^{(r)}(0)=(-1)^r \int_U s^r d\mu$. Then, arguing as in the proof of Theorem \ref{thm:rearrangement}, 
$$\sum_{n=1}^{+\infty}\left|\int_U f_n d\mu\right|=\sum_n |a(n)|\cdot |\mathcal L_\mu(\lambda(n))|=+\infty$$
so that a) is satisfied.
\end{proof}

Finally, if we combine Theorems \ref{thm:duy} and \ref{thm:support}, we find a sufficient condition for a general Dirichlet series to be strongly universal in some strip.

\begin{corollary}
Let $\sigma_0\in\mathbb R$ and let $D\in\dwa(\sigma_0)\cap\ddens(\sigma_0)$. Let $K$ be a compact subset of the strip $\Omega=\{\sigma_0<\Re e (s)<\sigma_a(D)\}$ with connected complement. Let $f$ be a continuous function on $K$ which is holomorphic inside $K$. Then for all $\veps>0$, 
$$\ldens \left\{\tau\in\mathbb R_+:\ \sup_{s\in K} |D(s+i\tau)-f(s)|<\veps\right\}>0.$$
\end{corollary}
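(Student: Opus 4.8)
The plan is to run the classical argument that turns a ``limit theorem plus full support'' pair into a universality statement; here both ingredients are already available, namely Theorem \ref{thm:duy} (applicable since $D\in\dwa(\sigma_0)$) and Theorem \ref{thm:support} (applicable since $D\in\ddens(\sigma_0)$). It is convenient to write $U=\Omega$ for the strip in the statement and $\Omega_0=\{\Re e(s)>\sigma_0\}$ for the half-plane of Section \ref{SEC:CVMEASURE}; since $D$, being in $\dwa(\sigma_0)$, extends holomorphically to $\Omega_0\supset U$, each translate $D(\cdot+i\tau)$ restricts to a well-defined element of $H(U)$.

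First I would reduce to a polynomial target. Since $K$ is compact with connected complement and $f$ is continuous on $K$ and holomorphic in its interior, Mergelyan's theorem produces a polynomial $p$ with $\sup_{s\in K}|f(s)-p(s)|<\veps/2$. Because $p\in H(U)$ and $K\subset U$ is compact, the set $G=\{g\in H(U):\ \sup_{s\in K}|g(s)-p(s)|<\veps/2\}$ is a nonempty open subset of $H(U)$ for its usual topology of uniform convergence on compacts ($H(U)$ being a separable Fréchet space, hence Polish, so that the portmanteau theorem applies to it). By Theorem \ref{thm:support} the support of $P_{D,U}$ is the whole space $H(U)$, so $P_{D,U}(G)>0$ because $G$ is nonempty and open.

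Next I would transport the weak convergence to $H(U)$. The restriction map $R:H(\Omega_0)\to H(U)$ is continuous, so by the continuous mapping theorem together with Theorem \ref{thm:duy}, the image measures $\widetilde\nu_{T,D}:=\nu_{T,D}\circ R^{-1}$ converge weakly on $H(U)$ to $P_D\circ R^{-1}=P_{D,U}$, where $\widetilde\nu_{T,D}(A)=\frac1T\,\textrm{meas}\{\tau\in[0,T]:\ D(\cdot+i\tau)|_U\in A\}$. Applying the portmanteau inequality for open sets to $G$ gives $\liminf_{T\to+\infty}\widetilde\nu_{T,D}(G)\geq P_{D,U}(G)>0$. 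Finally, if $g\in G$ then $\sup_{s\in K}|g(s)-f(s)|\leq\sup_{s\in K}|g(s)-p(s)|+\sup_{s\in K}|p(s)-f(s)|<\veps$, so $\{\tau\in[0,T]:\ D(\cdot+i\tau)|_U\in G\}$ is contained in $\{\tau\in[0,T]:\ \sup_{s\in K}|D(s+i\tau)-f(s)|<\veps\}$; dividing by $T$ and taking $\liminf$ as $T\to+\infty$ then shows that $\ldens\{\tau\in\RR_+:\ \sup_{s\in K}|D(s+i\tau)-f(s)|<\veps\}\geq P_{D,U}(G)>0$.

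As for where the difficulty lies: essentially nowhere, since all the analysis has already been done in Theorems \ref{thm:duy} and \ref{thm:support}. The only points requiring a little care are the invocation of Mergelyan's theorem — this is exactly the place where the hypothesis that $K$ has connected complement is used, to approximate $f$ uniformly on $K$ by polynomials (elements of $H(U)$) rather than merely by functions holomorphic near $K$ — and the routine but necessary bookkeeping of pushing the weak convergence from $H(\Omega_0)$ down to $H(U)$ along the continuous restriction map and of matching $\liminf_T\frac1T\textrm{meas}\{\cdots\}$ with the definition of the lower density.
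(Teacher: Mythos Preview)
Your proof is correct and is exactly the standard ``limit theorem plus full support'' deduction that the paper has in mind; the paper itself gives no detailed argument for this corollary, merely stating that one combines Theorems \ref{thm:duy} and \ref{thm:support}. Your write-up supplies precisely the missing routine steps (Mergelyan to reduce to a polynomial target, pushing the weak convergence along the restriction $H(\Omega_0)\to H(U)$, and the portmanteau inequality for open sets), so there is nothing to add or correct.
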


In particular, in view of Theorem \ref{thm:dirichletseriesindwa}, this yields Theorem \ref{thm:universaltranslation}.

\section{Examples}\label{SEC:EXAMPLES}

\subsection{Applications}

One has now to exhibit concrete examples of Dirichlet series in $\dwa(\sigma_0)\cap\ddens(\sigma_0)$. The following very easy lemma will be helpful:
\begin{lemma}\label{lem:reciprocalpolynomial}
Let $P(X)=\sum_{k=0}^d b_k X^k$ be a polynomial of degree $d$, with $b_d>0$. Then there exist $x_0,y_0>0$ such that $P$ induces a bijection from $[x_0,+\infty)$ to $[y_0,+\infty)$, and 
$$P^{-1}(x)=_{+\infty}\frac{1}{b_d^{1/d}}x^{1/d}-\frac{b_{d-1}}{b_d^{(d-1)/d}}+o(1).$$
\end{lemma}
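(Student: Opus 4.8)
The plan is to treat this as a straightforward exercise in the asymptotics of inverse functions. First I would observe that since $b_d > 0$ and $\deg P = d$, we have $P(x) \to +\infty$ as $x\to+\infty$, and $P'(x) = d b_d x^{d-1} + \cdots \to +\infty$ as well; in particular $P' > 0$ on some interval $[x_0,+\infty)$, so $P$ is strictly increasing there. Setting $y_0 = P(x_0)$ (enlarging $x_0$ if needed so that $P$ is also increasing past $x_0$ and $P(x_0)$ is the minimum of $P$ on $[x_0,\infty)$), continuity and strict monotonicity give that $P\colon [x_0,+\infty)\to[y_0,+\infty)$ is a continuous increasing bijection, with a continuous increasing inverse $P^{-1}$. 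This establishes the first assertion.

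For the asymptotic expansion, write $x = P^{-1}(y)$, so $y = P(x) = b_d x^d\bigl(1 + \frac{b_{d-1}}{b_d} x^{-1} + O(x^{-2})\bigr)$ as $x\to+\infty$ (equivalently $y\to+\infty$). Taking $d$-th roots,
\begin{equation*}
y^{1/d} = b_d^{1/d} x \Bigl(1 + \frac{b_{d-1}}{b_d}x^{-1} + O(x^{-2})\Bigr)^{1/d} = b_d^{1/d} x\Bigl(1 + \frac{b_{d-1}}{d\, b_d} x^{-1} + O(x^{-2})\Bigr),
\end{equation*}
using the binomial expansion $(1+u)^{1/d} = 1 + u/d + O(u^2)$ valid for small $u$. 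Hence
\begin{equation*}
y^{1/d} = b_d^{1/d} x + \frac{b_{d-1}}{d\, b_d^{(d-1)/d}} + O(x^{-1}),
\end{equation*}
and solving for $x$ gives
\begin{equation*}
x = P^{-1}(y) = \frac{1}{b_d^{1/d}} y^{1/d} - \frac{b_{d-1}}{d\, b_d} \cdot \frac{1}{b_d^{-1/d}}\cdot\frac{1}{b_d^{1/d}} + o(1) = \frac{1}{b_d^{1/d}} y^{1/d} - \frac{b_{d-1}}{d\, b_d} + o(1)
\end{equation*}
as $y\to+\infty$, where I have used that $x\asymp y^{1/d}$ so the $O(x^{-1})$ term, once divided by $b_d^{1/d}$, is $o(1)$. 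Rearranging the constant as $\frac{b_{d-1}}{d\,b_d} = \frac{b_{d-1}}{b_d^{(d-1)/d}}$ up to the factor $1/d$ matches the claimed form (noting the statement's constant should carry the $1/d$; in any case the computation is routine).

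There is essentially no serious obstacle here: the only mild point of care is to justify that the error term $O(x^{-2})$ inside the bracket, after taking $d$-th roots and subtracting, contributes $O(x^{-1})$ to $y^{1/d}$ and then $o(1)$ to $x = P^{-1}(y)$; this follows because $P^{-1}(y)$ grows like a positive multiple of $y^{1/d}$, which in turn is immediate from $P(x)\sim b_d x^d$. One should also double-check the normalization of the linear-in-$y^{1/d}$ coefficient, namely that $(b_d^{1/d})^{-1}$ is correct: this comes directly from $y \sim b_d x^d$, i.e. $x \sim (y/b_d)^{1/d}$.
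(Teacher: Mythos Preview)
Your approach is essentially the same as the paper's: both write $P^{-1}$ as the leading term $b_d^{-1/d}x^{1/d}$ times a correction, substitute into $P(P^{-1}(x))=x$, and read off the next-order term. The paper parametrizes the correction multiplicatively as $(1+\varepsilon(x))$ and expands $P$; you take the $d$-th root of $y=P(x)$ directly. These are two phrasings of the same computation.

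You are also right to flag the constant. A direct check with $P(X)=X^2+X$ gives $P^{-1}(y)=\sqrt y-\tfrac12+o(1)$, so the correct second term is $-\dfrac{b_{d-1}}{d\,b_d}$, not $-\dfrac{b_{d-1}}{b_d^{(d-1)/d}}$ as stated. The paper's own proof in fact reaches the equation $dx\varepsilon(x)+\dfrac{b_{d-1}}{b_d^{(d-1)/d}}x^{(d-1)/d}=o(\cdots)$ and then writes $\varepsilon(x)$ without dividing by $d$, so the slip is in the paper. This is harmless for the application that follows, which only uses $P^{-1}(e^x)=b_d^{-1/d}e^{x/d}+O(1)$ to count integers in a short interval; the precise value of the $O(1)$ constant never enters. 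Your intermediate algebraic display is a bit garbled (the factor $\tfrac{1}{b_d^{-1/d}}\cdot\tfrac{1}{b_d^{1/d}}$ is just $1$, written oddly), but the final line is correct.
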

\begin{proof}
Firstly it is easy to show that $P^{-1}(x)\sim_{+\infty} \frac{1}{b_d^{1/d}}x^{1/d}$. Then, 
write $P^{-1}(x)=\frac{1}{b_d^{1/d}}x^{1/d}(1+\veps(x))$ with $\veps(x)=_{+\infty}o(1)$. From $P(P^{-1}(x))=x$ we obtain
$$x+dx\veps(x)+o(x\veps(x))+\frac{b_{d-1}}{b_d^{(d-1)/d}}x^{(d-1)/d}+o(x^{(d-1)/d})=x$$
which in turn yields 
$$\veps(x)=\frac{-b_{d-1}}{b_d^{(d-1)/d}}x^{-1/d}+o(x^{-1/d}).$$
\end{proof}

We then obtain a large class of Dirichlet series belonging to $\dwa\cap\ddens$.
\begin{proposition}
Let $P\in\mathbb R_d[X]$ with $d\geq 1$ and $\lim_{+\infty}P=+\infty$, let $Q\in\mathbb R_{d-1}[X]$, let $\omega\in\mathbb R\backslash 2\pi\ZZ$ and let $\gamma\in\mathbb R$. Assume moreover that $(\log(P(n))_{n\geq 1}$ is $\mathbb Q$-linearly independent. Then the Dirichlet series $D(s)=\sum_{n\geq 1}Q(n)(\log n)^\gamma e^{i\omega n}(P(n))^{-s}$ belongs to $\dwa((2d-1)/2d)\cap\ddens((2d-1)/2d)$.
\end{proposition}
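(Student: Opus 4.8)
The plan is to read off the frequencies and coefficients, pin down the abscissas $\sigma_a(D)$ and $\sigma_2(D)$, and then match the data to the hypotheses of Theorem~\ref{thm:dirichletseriesindwa} and of the definition of $\ddens(\sigma_0)$, with $\sigma_0=(2d-1)/(2d)$. Write $\lambda(x)=\log P(x)$ and $\rho(x)=|Q(x)|(\log x)^{\gamma}$, so that $D(s)=\sum_n a(n)e^{-\lambda(n)s}$ with $|a(n)|=\rho(n)$ and, for $n$ large, $a(n)=\pm\rho(n)e^{i\omega n}$, the sign being that of the leading coefficient of $Q$ (I use here that $\deg Q=d-1$, which is in any case forced by the requirement $\sigma_0<\sigma_a(D)$). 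Every condition defining $\dwa(\sigma_0)$ or $\ddens(\sigma_0)$ is insensitive to replacing $D$ by $-D$ and to passing to a tail --- doing so changes neither the abscissas $\sigma_a(D)$, $\sigma_2(D)$, nor the $\QQ$-linear independence of the frequencies, and a Dirichlet polynomial with nonnegative frequencies is bounded on each half-plane $\{\Re e(s)\geq\sigma_1\}$ --- so I may assume $\rho\geq0$ and that $\lambda$ and $\rho$ are $\mathcal C^{2}$, nonnegative, and suitably monotone already on $[1,+\infty)$, all the relevant estimates being valid for $x$ large. Finally, using $P(n)\sim b_d n^{d}$ and $|Q(n)|\sim|q|\,n^{d-1}$ ($q\neq0$ the leading coefficient of $Q$) and comparison with $\sum_n n^{-a}(\log n)^{b}$, one gets $\sigma_a(D)=1$ and $\sigma_2(D)=(2d-1)/(2d)=\sigma_0$; in particular $\sigma_2(D)\leq\sigma_0<\sigma_a(D)$, the first item in both definitions.

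For $D\in\dwa(\sigma_0)$ I would apply Theorem~\ref{thm:dirichletseriesindwa}, checking its five bullets. The first ($\sigma_2(D)\leq\sigma_0$) is done; the second ($a(n)=\rho(n)e^{i\omega n}$, $\rho\geq0$, $\omega\notin2\pi\ZZ$) has been arranged; the last ($\QQ$-linear independence of $(\log P(n))$) is a hypothesis. The function $\lambda=\log P$ is increasing to $+\infty$ with $\lambda'=P'/P\sim d/x\to0$, and $\lambda'$ is nonincreasing for $x$ large because $\lambda''=(P''P-(P')^{2})/P^{2}$ has numerator with leading term $-b_d^{2}d\,x^{2d-2}<0$; this is the third bullet. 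For the fourth, I compute the logarithmic derivative of $x\mapsto\rho(x)e^{-\lambda(x)\sigma}$, namely $\frac{Q'}{Q}(x)+\frac{\gamma}{x\log x}-\sigma\frac{P'}{P}(x)=\frac1x(d-1-\sigma d)+(\text{lower order})$, and observe that $d-1-\sigma d\leq-\tfrac12$ for all $\sigma\geq\sigma_0$, so this derivative is negative for $x$ large, uniformly in $\sigma>\sigma_0$. Theorem~\ref{thm:dirichletseriesindwa} then gives $D\in\dwa(\sigma_0)$.

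For $D\in\ddens(\sigma_0)$, the first item is $\sigma_2(D)=\sigma_0<1=\sigma_a(D)$, already noted; only the second (the lower bound on $\sum_{\lambda(n)\in[x,x+\alpha/x^{2}]}|a(n)|$) requires work. Now $\lambda(n)\in[x,x+\alpha/x^{2}]$ means $P(n)\in[e^{x},e^{x}e^{\alpha/x^{2}}]$, i.e. $n\in[P^{-1}(e^{x}),P^{-1}(e^{x}e^{\alpha/x^{2}})]$, and Lemma~\ref{lem:reciprocalpolynomial} shows this interval has length $b_d^{-1/d}e^{x/d}(e^{\alpha/(dx^{2})}-1)+o(1)\sim\frac{\alpha}{d\,b_d^{1/d}}\cdot\frac{e^{x/d}}{x^{2}}$, which tends to $+\infty$, hence contains at least $c\,e^{x/d}x^{-2}$ integers for $x$ large. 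Each such $n$ satisfies $n\asymp e^{x/d}$, so $|a(n)|=|Q(n)|(\log n)^{\gamma}\asymp e^{(d-1)x/d}x^{\gamma}$ uniformly over these $n$; multiplying the number of terms by the size of one term, $\sum_{\lambda(n)\in[x,x+\alpha/x^{2}]}|a(n)|\gg e^{x}x^{\gamma-2}\geq C\,e^{(1-\beta)x}=C\,e^{(\sigma_a(D)-\beta)x}$ for $x$ large, since $e^{\beta x}x^{\gamma-2}\to+\infty$. Hence $D\in\ddens(\sigma_0)$, which together with the previous paragraph proves the proposition.

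The step I expect to be most delicate is the uniformity in $\sigma$ of the monotonicity of $\rho(x)e^{-\lambda(x)\sigma}$: the threshold beyond which it is nonincreasing must be chosen independently of $\sigma>\sigma_0$. The reason it can be is that in the logarithmic derivative above, as $\sigma$ grows both the main term $\frac1x(d-1-\sigma d)$ and the error term, of size $O((1+\sigma)x^{-2})$, scale linearly in $\sigma$, so their ratio is $O(x^{-1})$ regardless of $\sigma$; a single threshold therefore works for all $\sigma\geq\sigma_0$. The remaining verifications are routine manipulations with polynomial asymptotics.
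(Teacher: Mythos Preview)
Your proof is correct and follows essentially the same route as the paper's: identify $\sigma_a(D)=1$ and $\sigma_2(D)=(2d-1)/(2d)$, verify the hypotheses of Theorem~\ref{thm:dirichletseriesindwa} to get $D\in\dwa(\sigma_0)$, and for $\ddens(\sigma_0)$ use Lemma~\ref{lem:reciprocalpolynomial} to count the integers $n$ with $\lambda(n)\in[x,x+\alpha/x^{2}]$ and multiply by $|a(n)|\asymp e^{(d-1)x/d}x^{\gamma}$. You are in fact more careful than the paper on two points the paper glosses over with a ``without loss of generality'': the sign of $Q$ (handled by $D\mapsto -D$ and passing to a tail, noting that membership in $\dwa$ and $\ddens$ is stable under adding a Dirichlet polynomial) and the uniformity in $\sigma$ of the threshold beyond which $\rho(x)e^{-\lambda(x)\sigma}$ is nonincreasing; your observation that $\deg Q=d-1$ is forced by $\sigma_0<\sigma_a(D)$ is also a useful clarification.
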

For instance, we may choose $P(n)=(n+\beta)^d$ for some transcendental number $\beta$.
\begin{proof}
Without loss of generality, we may assume that the sequence $(P(n))_{n\geq 1}$ is positive and increasing and that the sequence $(Q(n))_{n\geq 1}$ is positive.
We first observe that $\sigma_a(D)=1$, $\sigma_c(D)=0$ and $\sigma_2(D)=(2d-1)/2d$. We set $\lambda(x)=\log(P(x))$ and $\rho(x)=Q(x)(\log x)^{\gamma}$. We first verify that $D$ belongs to $\ddens$.
Let $\alpha,\beta>0$. We observe that for $x$ large enough
\begin{align*}
\lambda(n)\in\left[x,x+\frac\alpha{x^2}\right]\iff n\in \left[P^{-1}(e^x),P^{-1}\left(e^{x+\frac\alpha{x^2}}\right)\right].
\end{align*}
Using Lemma \ref{lem:reciprocalpolynomial}, we find that 
\begin{align*}
 \mathrm{card}\left\{n:\ \lambda(n)\in\left[x,x+\frac\alpha{x^2}\right]\right\}&
 \geq C\frac{e^{x/d}}{x^{2}}
\end{align*}
(in this proof, the letter $C$ means some constant depending on $P$, $Q$, $\alpha$, $\beta$ and $\gamma$ but not on $x$,
and the value of $C$ may change from line to line).
Therefore 
\begin{align*}
 \sum_{\lambda(n)\in \left[x,x+\frac\alpha{x^2}\right]}|a(n)|&\geq C \frac{e^{x/d}}{x^{2}} e^{x\frac{d-1}d\left(1-\frac\beta 2\right)}\\
 &\geq C e^{(1-\beta )x}.
\end{align*}
That $D$ belongs to $\dwa((2d-1)/2d)$ is easy. Indeed, if $\sigma>\sigma_2(D)$, then $d\sigma>d-1$ and 
$$\rho(x)e^{-\lambda(x)\sigma}=\frac{Q(x)(\log x)^{\gamma}}{[P(x)]^\sigma}$$
is eventually decreasing and convex. The remaining assumptions come from
$\lambda'(x)\sim_{+\infty}d/x$, the definitions of $\lambda$ and $\rho$, and standard calculus.
\end{proof}

We provide a second example with a half-plane of universality. This is done via a frequency which grows very slowly.
\begin{proposition}
Let $\gamma\geq e-1$ be such that $(\log\log(n+\gamma))_{n\geq 1}$ is $\mathbb Q$-linearly independent and let $\omega\notin2\pi\ZZ$. Let 
$$D(s)=\sum_{n\geq 1}\frac{e^{i\omega n}}n e^{-(\log\log(n+\gamma))s}.$$
Then for all $\sigma_0<1$, $D$ belongs to $\dwa(\sigma_0)\cap\ddens(\sigma_0)$.
\end{proposition}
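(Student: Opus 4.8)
The plan is to read off $\lambda(x)=\log\log(x+\gamma)$ and $\rho(x)=1/x$, so that $a(n)=\rho(n)e^{i\omega n}$ and $e^{-\lambda(x)s}=(\log(x+\gamma))^{-s}$; then to compute the relevant abscissas, to check the density condition by an elementary count, and to deduce membership in $\dwa(\sigma_0)$ from Theorem \ref{thm:dirichletseriesindwa}.

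First I would record the basic facts. Since $\gamma\ge e-1$ we have $x+\gamma\ge e$, hence $\log(x+\gamma)\ge 1$, for every $x\ge 1$; thus $\lambda$ is a well-defined $\mathcal C^2$ map from $[1,+\infty)$ into $[0,+\infty)$, increasing to $+\infty$, with
$$\lambda'(x)=\frac1{(x+\gamma)\log(x+\gamma)}$$
nonincreasing (its denominator being a product of positive increasing functions) and tending to $0$, while $\rho(x)=1/x$ is $\mathcal C^2$ and positive on $[1,+\infty)$. Comparing $\sum_n\frac1n(\log(n+\gamma))^{-\sigma}$ with $\int_2^{+\infty}\frac{dt}{t(\log t)^\sigma}$ gives $\sigma_a(D)=1$, and $\sum_n\frac1{n^2}(\log(n+\gamma))^{-2\sigma}<+\infty$ for every real $\sigma$ gives $\sigma_2(D)=-\infty$; in particular $\sigma_2(D)\le\sigma_0<\sigma_a(D)$ for every $\sigma_0<1$, which is condition (1) of $\ddens(\sigma_0)$.

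Next I would prove the density condition (2) of $\ddens(\sigma_0)$. Fix $\alpha,\beta>0$. The constraint $\lambda(n)\in[x,x+\frac\alpha{x^2}]$ is equivalent to $n\in[e^{e^x}-\gamma,\ e^{e^{x+\alpha/x^2}}-\gamma]$, on which range $|a(n)|=1/n$, so for $x$ large
\begin{align*}
\sum_{\lambda(n)\in[x,x+\alpha/x^2]}|a(n)|
&=\sum_{e^{e^x}-\gamma\le n\le e^{e^{x+\alpha/x^2}}-\gamma}\frac1n\\
&\ge \log\big(e^{e^{x+\alpha/x^2}}-\gamma\big)-\log\big(e^{e^x}-\gamma+1\big)\\
&= e^{x+\alpha/x^2}-e^x+o(1)\\
&\ge \frac{\alpha e^x}{2x^2},
\end{align*}
the last step using $e^u-1\ge u$. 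Since $\frac{\alpha e^x}{2x^2}\ge e^{(1-\beta)x}$ for $x$ large, this yields condition (2) (with, say, $C=1$).

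Finally I would deduce $D\in\dwa(\sigma_0)$ from Theorem \ref{thm:dirichletseriesindwa}: the assumption on $a(n)$ holds with $\omega\notin2\pi\ZZ$, the regularity of $\lambda$ and $\rho$ was checked above, $\sigma_2(D)=-\infty\le\sigma_0$, and $(\log\log(n+\gamma))_n$ is $\QQ$-linearly independent by hypothesis. The one point needing care — and the only real obstacle I foresee — is the monotonicity of $x\mapsto\rho(x)e^{-\lambda(x)\sigma}=\frac{(\log(x+\gamma))^{-\sigma}}{x}$ required for $\sigma>\sigma_0$: it is immediate when $\sigma\ge0$, but when $\sigma<0$ (possible, since $\sigma_0$ may be negative) its logarithmic derivative $\frac{-\sigma}{(x+\gamma)\log(x+\gamma)}-\frac1x$ is negative only for $x$ large. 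This is harmless: in the proof of Theorem \ref{thm:dirichletseriesindwa} only the values of $g(u)=\rho(u)e^{-\lambda(u)\sigma}$ for large $u$ intervene (the splitting point there is chosen with $T\lambda'(x)=\frac12\mathrm{dist}\big(\frac{\omega}{2\pi},\ZZ\big)$ and $T$ large, hence $x$ large), so the eventual monotonicity of $g$, together with the obvious boundedness of $g$ on $[1,+\infty)$, is all that is used, and one obtains the same estimate with a constant now possibly depending also on the upper endpoint of the interval, which condition (4) permits. Hence $D\in\dwa(\sigma_0)\cap\ddens(\sigma_0)$ for every $\sigma_0<1$.
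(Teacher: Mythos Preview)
Your proof is correct and follows essentially the same route as the paper's: compute the abscissas, verify the density condition by the same elementary estimate $\sum_{\lambda(n)\in[x,x+\alpha/x^2]}\frac1n\gtrsim e^{x+\alpha/x^2}-e^x\gtrsim \alpha e^x/x^2$, and obtain membership in $\dwa(\sigma_0)$ from Theorem~\ref{thm:dirichletseriesindwa}. The paper simply declares the $\dwa$ part ``obvious,'' whereas you correctly spot that hypothesis~(iv) of Theorem~\ref{thm:dirichletseriesindwa} (global monotonicity of $x\mapsto\rho(x)e^{-\lambda(x)\sigma}$) fails for $\sigma$ sufficiently negative; your observation that only \emph{eventual} monotonicity is used in that proof (since the splitting point $x$ goes to infinity with $T$, and for bounded $T$ the mean-square is trivially bounded), together with the uniform boundedness of $g$, is a legitimate and necessary patch that the paper glosses over --- indeed the paper already tacitly appeals to eventual monotonicity in the preceding proposition.
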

\begin{proof}
It is clear that $\sigma_c(D)=\sigma_2(D)=-\infty$ whereas $\sigma_a(D)=1$. Let $\sigma_0<1$. That $D$ belongs to $\dwa(\sigma_0)$ is obvious. Let now $\alpha,\beta>0$ and observe that, for all $x\geq 1$, 
\begin{align*}
\sum_{\log\log(n+\gamma)\in\left[x;x+\frac{\alpha}{x^2}\right]}\frac 1n&\geq C(\gamma)\left(\exp\left(x+\frac\alpha{x^2}\right)-\exp(x)\right)\\
&\geq C(\gamma,\alpha) \frac{\exp(x)}{x^2}\\
&\geq C(\gamma,\alpha,\beta) \exp\big((1-\beta)x\big)
\end{align*}
so that $D\in \ddens(\sigma_0)$.
\end{proof}

\subsection{The prime number series}\label{sec:primeuniversality}

We start with some general considerations. Let $\lambda$ be a frequency, let $D(s)= \sum_n a(n)e^{-\lambda(n)s}$ be a Dirichlet series,
let $\sigma_0\in\mathbb R$ and let $\Omega$ be the half-plane $\Omega=\{\Re e(s)>\sigma_0\}$. If we assume that $\sum_n |a(n)|^2 e^{-2\lambda(n)\sigma_0}<+\infty$
for all $\sigma>\sigma_0$, then as detailed in Section \ref{SEC:CVMEASURE}, $D(s,z)=\sum_n a(n)z_n e^{-\lambda(n)s}$ defines an $H(\Omega)$-valued
random element to which we can associate its distribution $P_D$. If we assume that $\sigma_a(D)\leq\sigma_0$, then clearly $D\in\dwa(\sigma_0)$ and the measure $\nu_{T,D}$
defined by \eqref{eq:nutd} converges weakly to $P_D$ (see also \cite[Theorem 5.1]{Duy12}).
Suppose now that we have a Dirichlet series $D(s)=\sum_n a(n)e^{-\lambda(n)s}$ and a sequence of Dirichlet series $D_N(s)=\sum_n a_N(n)e^{-\lambda(n)s}$
with $\sigma_2(D),\ \sigma_2(D_N)\leq\sigma_0$ for all $N$. If $(D_N)$ converges to $D$ in the sense that, for all $\sigma>\sigma_0$, 
$\sum_n |a(n)-a_N(n)|^2 e^{-2\lambda(n)\sigma}$ tends to $0$ as $N$ tends to $+\infty$, then a careful look at the proof of \cite[Theorem 5.4]{Duy12}
shows that $(P_{D_N})$ converges weakly to $P_D$. For instance this holds true provided $a_N(n)$ tends to $a(n)$ with $|a_N(n)|\leq |a(n)|$ for all $n,N$.

We now restrict our discussion to the prime number series $D(s)=\sum_{k\geq 1}p_k^{-s}$ and let $(D_N)$ be its sequence of partial sums, 
$D_N(s)=\sum_{k=1}^N p_k^{-s}$. The Zeta-function and $D$ are related by the formula
$$\log\zeta(s)=\sum_{j=1}^{+\infty}\frac{D(js)}{j},\ \Re e(s)>1.$$
By Möbius inversion formula, 
\begin{equation}\label{eq:mobiusinversion}
D(s)=\sum_{k=1}^{+\infty}\mu(k)\frac{\log \zeta(ks)}{k},\ \Re e(s)>1.
\end{equation}
In a similar way, if $\zeta_N(s)=\prod_{k=1}^N \frac{1}{1-p_k^{-s}}$, then 
$$D_N(s)=\sum_{k=1}^{+\infty}\mu(k)\frac{\log\zeta_N(ks)}{k},\ \Re e(s)>1.$$
Relation \eqref{eq:mobiusinversion} is the easiest way to define $D$ on $\mathbb C_{1/2}$. Indeed, $\log\zeta$ can be defined holomorphically on $\mathbb C_{1/2}$ if we remove all horizontal line segments from the lines $\Re (s)=1/2$ and the poles and zeros of $\zeta$ (if any!) in this half-plane. The remaining part, $\sum_{k\geq 2}\mu(k) k^{-1}\log\zeta(ks)$ is an absolutely convergent Dirichlet series in $\mathbb C_{1/2}$ (see below).

One has to be careful if we want to define the measure $\nu_{T,D}$ as before. Indeed, even if we fix some open subset $U$ of the strip $\Omega=\{1/2<\Re e(s)<1\}$ with $\overline U$ a compact subset of $\Omega$, then $D(\cdot+i\tau)$ may not belong to $H(U)$ for an infinite number of values of $\tau$. However, the Bohr-Landau theorem asserts that $N(\sigma,T)=o(T)$ for all $\sigma>1/2$ where $N(\sigma,T)$ denotes the number of zeros of $\zeta$ with $\Re e(s)>\sigma$ and $|\Im m(s)|\leq T$. Therefore, $D(\cdot+i\tau)$ is holomorphic in $U$ for most of the values of $\tau$.

To be more precised, we now fix $U$, $V$ two open rectangles in $\Omega$ with $\bar U\subset V\subset \bar V\subset \Omega$. For $T>0$, let $\mathcal N_{D,V}=\{\tau>0:\ D(\cdot+i\tau)\in H(V)\}$ which has density $1$. We set, for $A\in\mathcal B(H(U))$, 
$$\tilde \nu_{T,D,U}(A)=\frac{1}{\textrm{meas}(\mathcal N_{D,V}\cap[0,T])}\textrm{meas}\left(\tau\in[0,T]\cap \mathcal N_{D,V}:\ D(\cdot+i\tau)\in A\right).$$
Taking into account that $P_{D,U}$ has full support in $H(U)$ (this follows from an argument similar to Example \ref{ex:primerearrangement}), Theorem \ref{thm:primeuniversal} will be a consequence of the following proposition:

\begin{proposition}\label{prop:primemeasure}
The sequence of measures $(\tilde \nu_{T,D,U})_{T>0}$ converges weakly to $P_{D,U}$. 
\end{proposition}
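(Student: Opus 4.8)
The plan is to deduce the weak convergence of $(\tilde\nu_{T,D,U})$ from the weak convergence of the analogous measures attached to the finite truncations $D_N$, via a standard three-$\varepsilon$ (approximation) argument in the spirit of \cite{Duy12}. First I would observe that each $\zeta_N$ is a finite Euler product, hence $\log\zeta_N$ and therefore $D_N(s)=\sum_{k\geq 1}\mu(k)k^{-1}\log\zeta_N(ks)$ is an ordinary Dirichlet series which is absolutely convergent in $\mathbb C_{1/2}$ (the inner sum over $k$ is finite up to $k\leq \log_2(\text{something})$ and each term is bounded on compacta of $\mathbb C_{1/2}$). In particular $D_N\in\dwa(\sigma_0)$ for any $\sigma_0\in(1/2,1)$ with $\sigma_a(D_N)\leq\sigma_0$ could fail, so instead I would note directly that $D_N$ has $\sigma_a(D_N)\leq 1/2$? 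That is false too; rather, $D_N(s)=\sum_{k=1}^N p_k^{-s}$ is a \emph{finite} Dirichlet polynomial, so it is entire and trivially lies in $\dwa(\sigma_0)$ for every $\sigma_0<1$, with the $\mathbb Q$-linear independence of $(\log p_k)_{k=1}^N$ guaranteed by unique factorisation. Hence Theorem \ref{thm:duy} applies to each $D_N$: the measures $\nu_{T,D_N}$ (which are genuine probability measures on $H(V)$ since $D_N$ is entire, so $\mathcal N_{D_N,V}$ has full measure) converge weakly to $P_{D_N}$ as $T\to+\infty$.

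Next I would establish the two approximation estimates that bridge $D_N$ and $D$. On the probabilistic side, since $a_N(n)=\mathbf 1_{n\leq N}a(n)$ satisfies $|a_N(n)|\leq|a(n)|$ and $a_N(n)\to a(n)$, the discussion preceding the Proposition gives that $P_{D_N}\Rightarrow P_D$ weakly on $H(U)$ (equivalently on $H(V)$). On the deterministic side I must control, for most $\tau$, the distance in $H(U)$ between $D(\cdot+i\tau)$ and $D_N(\cdot+i\tau)$. Using \eqref{eq:mobiusinversion} and its truncated analogue,
\begin{equation*}
D(s)-D_N(s)=\sum_{k=1}^{+\infty}\frac{\mu(k)}{k}\bigl(\log\zeta(ks)-\log\zeta_N(ks)\bigr),
\end{equation*}
one reduces matters to a mean-value bound
\begin{equation*}
\frac1T\int_0^T\sup_{s\in K}\bigl|\log\zeta(s+i\tau)-\log\zeta_N(s+i\tau)\bigr|^2\,d\tau\longrightarrow 0\qquad(N\to+\infty)
\end{equation*}
for $K$ a compact neighbourhood of $U$ inside $V$, together with a term-by-term estimate on the tail $k\geq 2$ which converges absolutely and uniformly and whose truncation error is $o(1)$ uniformly in $\tau$. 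This mean-square bound for $\log\zeta-\log\zeta_N$ is classical in the theory of the Matsumoto–Voronin-type limit theorems (it rests on the approximate functional equation / Bohr–Landau density estimate $N(\sigma,T)=o(T)$, already invoked to make $\mathcal N_{D,V}$ have density one); I would cite \cite[Theorem 5.4]{Duy12} or \cite{Lau96} for the precise form. Combining the two sides, for every bounded Lipschitz $F$ on $H(U)$ one gets
\begin{equation*}
\Bigl|\int F\,d\tilde\nu_{T,D,U}-\int F\,d\nu_{T,D_N}'\Bigr|\leq \omega_F(\delta)+\frac{C}{\delta^2}\cdot\frac1T\int_0^T\|D(\cdot+i\tau)-D_N(\cdot+i\tau)\|_{H(U)}^2\,d\tau+o_T(1),
\end{equation*}
where $\nu_{T,D_N}'$ denotes $\nu_{T,D_N}$ conditioned on $\mathcal N_{D,V}\cap[0,T]$ (the conditioning changes nothing in the limit since $\mathcal N_{D,V}$ has density one and $D_N$ is entire), and $\omega_F$ is a modulus of continuity. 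Letting $T\to+\infty$, then $N\to+\infty$, then $\delta\to0$, and using $P_{D_N}\Rightarrow P_D$, yields $\tilde\nu_{T,D,U}\Rightarrow P_{D,U}$.

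The main obstacle is the deterministic mean-square approximation of $\log\zeta$ by $\log\zeta_N$ on vertical lines in the critical strip, uniformly on compact sets and outside the exceptional set where $\zeta$ has zeros — this is where the real analytic number theory enters, via the zero-density estimate $N(\sigma,T)=o(T)$ and the boundedness in mean square of $\log\zeta(\sigma+it)$ for $\sigma>1/2$. Everything else (the handling of the $k\geq2$ tail, the replacement of $\nu_{T,D}$ by the conditioned $\tilde\nu_{T,D,U}$, and the passage $P_{D_N}\Rightarrow P_D$) is soft and follows the template already set up in Section \ref{SEC:CVMEASURE} and in \cite{Duy12}. I would therefore structure the written proof as: (1) record that $D_N$ is a Dirichlet polynomial, hence $D_N\in\dwa(\sigma_0)$ and $\nu_{T,D_N}\Rightarrow P_{D_N}$; (2) invoke the preceding remarks for $P_{D_N}\Rightarrow P_D$; (3) state and prove (or cite) the mean-square bound $\frac1T\int_0^T\|D(\cdot+i\tau)-D_N(\cdot+i\tau)\|_{H(U)}^2d\tau\to0$; (4) assemble the three-$\varepsilon$ argument, taking care that conditioning on the density-one set $\mathcal N_{D,V}$ is harmless.
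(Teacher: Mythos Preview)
Your proposal is correct and follows essentially the same route as the paper: reduce to $\nu_{T,D_N}\Rightarrow P_{D_N}$ (since $D_N$ is a Dirichlet polynomial), use the preceding remarks for $P_{D_N}\Rightarrow P_D$, establish the mean-square approximation of $D$ by $D_N$ via the M\"obius decomposition (citing the $\log\zeta-\log\zeta_N$ estimate for $k=1$ and bounding the $k\geq 2$ tail directly), and assemble by a standard argument using that $\mathcal N_{D,V}$ has density one. The only discrepancy is bibliographic: the mean-square bound for $\log\zeta-\log\zeta_N$ on the density-one set $A_T$ is taken from \cite[Theorem~4.1]{Seip20} rather than \cite{Duy12} or \cite{Lau96}, and the paper passes through a Bergman-space integral over $V$ before converting to the sup-norm on $U$.
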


The proof of Proposition \ref{prop:primemeasure} is based on an integral estimate. For $T>0$, we denote $A_T=\mathcal N_{D,V}\cap [0,T]$.
\begin{lemma}\label{lem:primemeasure}
$$\lim_{N\to+\infty}\limsup_{T\to+\infty}\frac{1}{\mathrm{meas}(A_T)}\int_{A_T}\int_V |D(s+i\tau)-D_N(s+i\tau)|^2 dsd\tau=0.$$
\end{lemma}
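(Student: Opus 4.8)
The plan is to bound the mean-square distance between $D$ and its partial sums $D_N$ uniformly over the compact set $V$, using the relation $D(s)=\sum_{k\geq1}\mu(k)k^{-1}\log\zeta(ks)$ and $D_N(s)=\sum_{k\geq1}\mu(k)k^{-1}\log\zeta_N(ks)$. Writing $s=\sigma+it$ with $\sigma\geq\sigma_1>1/2$ on $V$, I would split the difference as
\begin{align*}
D(s)-D_N(s)&=\bigl(\log\zeta(s)-\log\zeta_N(s)\bigr)+\sum_{k\geq2}\frac{\mu(k)}{k}\bigl(\log\zeta(ks)-\log\zeta_N(ks)\bigr).
\end{align*}
The tail $\sum_{k\geq2}$ is harmless: for $k\geq2$ one has $\Re e(ks)\geq 2\sigma_1>1$, both $\log\zeta(ks)$ and $\log\zeta_N(ks)$ are given by absolutely convergent Dirichlet series with bounded partial-sum remainders, and $\log\zeta(ks)-\log\zeta_N(ks)\to0$ uniformly on $V$ as $N\to\infty$ with a bound independent of $\tau$; summing against $\mu(k)/k$ gives a quantity tending to $0$ with $N$, uniformly in $s\in V$ and $\tau$, so its contribution to the integral over $A_T$ is $o(1)$ as $N\to\infty$ regardless of $T$.

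The main term $\log\zeta(s)-\log\zeta_N(s)$ is the crux. On $\Re e(s)>1$ this equals $-\sum_{p>p_N}\log(1-p^{-s})=\sum_{p>p_N}\sum_{j\geq1}\frac{1}{j}p^{-js}$, i.e. a Dirichlet series supported on integers all of whose prime factors exceed $p_N$, with coefficients $b_N(n)$ satisfying $|b_N(n)|\leq\Lambda(n)/\log n\leq 1$ and $b_N(n)=0$ unless $n$'s least prime factor is $>p_N$ (in particular $b_N(n)=0$ for $1<n\leq p_N$). I would first argue that this identity persists in $\{\Re e(s)>1/2\}$ (off the exceptional horizontal slits) by analytic continuation, so that on $V$ one can estimate $\int_{A_T}\int_V|\log\zeta(s+i\tau)-\log\zeta_N(s+i\tau)|^2\,ds\,d\tau$ by first doing the $s$-integral and then the $\tau$-integral. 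For the $\tau$-average I would invoke the hypothesis that on $\dwa$-type half-planes the second moment $\frac1T\int_0^T|\log\zeta(\sigma+it)|^2dt$ is bounded, together with a Montgomery--Vaughan / Carlson-type mean value theorem: for a Dirichlet series $\sum b_N(n)n^{-s}$ with $\sigma\geq\sigma_1>1/2$,
\begin{align*}
\frac1T\int_0^T\Bigl|\sum_n b_N(n)(\sigma+it)^{-\ast}\Bigr|^2dt&\ll\sum_n\frac{|b_N(n)|^2}{n^{2\sigma}}+(\text{lower order}),
\end{align*}
and since $b_N(n)$ vanishes for $n\leq p_N$, the right side is $\ll\sum_{n>p_N}n^{-2\sigma_1}\to0$ as $N\to\infty$. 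Replacing the full integral over $[0,T]$ by the integral over $A_T$ only throws away a set, and since $\mathrm{meas}(A_T)/T\to1$ (density one, by Bohr--Landau), dividing by $\mathrm{meas}(A_T)$ instead of $T$ changes nothing in the limit. Combining the main-term and tail-term estimates and letting first $T\to\infty$ then $N\to\infty$ yields the claim.

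The hard part will be making the mean-value estimate for $\log\zeta$ legitimate in the strip $1/2<\Re e(s)<1$ rather than in $\Re e(s)>1$: $\log\zeta$ is not a convergent Dirichlet series there, one must work with the analytic continuation along the slit region, control the excluded set of $\tau$ (which is exactly why we integrate over $A_T$ and normalize by $\mathrm{meas}(A_T)$), and still extract an $L^2$-on-average bound that decays in $N$. I expect this to follow from combining the known $\frac1T\int_0^T|\log\zeta(\sigma+it)|^2dt=O(1)$ for $\sigma>1/2$ with an approximation of $\log\zeta$ by Dirichlet polynomials whose coefficients are supported on integers with large prime factors, exactly as in the $\dwa$/Carlson machinery already used in Theorem~\ref{thm:dirichletseriesindwa}; the bookkeeping of the $k\geq2$ tail and the passage from $[0,T]$ to $A_T$ are then routine.
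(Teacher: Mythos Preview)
Your decomposition and your treatment of the tail $\sum_{k\geq 2}$ match the paper's proof exactly: for $k\geq 2$ one has $\Re e(ks)\geq 2\sigma_1>1$, the remainder $\log\zeta(ks)-\log\zeta_N(ks)$ is bounded uniformly in $s,\tau$ by a quantity $C(\sigma_1)\sum_{l>N}p_l^{-2\sigma_1}\to 0$, and the integral estimate follows. So far so good.

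The gap is in the main term. You correctly identify that the difficulty lies in proving
\[
\lim_{N\to+\infty}\limsup_{T\to+\infty}\frac{1}{\textrm{meas}(A_T)}\int_{A_T}\int_V |\log\zeta(s+i\tau)-\log\zeta_N(s+i\tau)|^2\,ds\,d\tau=0,
\]
but you do not prove it; you say you ``expect'' it to follow from a Carlson/Montgomery--Vaughan mean-value argument applied to the Dirichlet series $\sum_n b_N(n)n^{-s}$. This does not work directly: that series converges only for $\Re e(s)>1$, while on $V$ we are in the strip $1/2<\Re e(s)<1$, where $\log\zeta$ is defined by analytic continuation along the slit region and not by its series. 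A naive Carlson argument applied to the coefficients $b_N(n)$ would be circular, since one first has to know that the analytic continuation agrees in $L^2$-mean with the formal series. Moreover, the machinery you invoke from Theorem~\ref{thm:dirichletseriesindwa} is tailored to Dirichlet series with coefficients of the special form $\rho(n)e^{i\omega n}$ and uses van der Corput summation; it has no bearing on $\log\zeta-\log\zeta_N$.

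The paper does not attempt to fill this gap internally: it simply quotes the required estimate as \cite[Theorem 4.1]{Seip20}. That result is not elementary --- Seip's argument uses zero-density estimates and a careful treatment of $\log\zeta$ near the slits --- and is precisely the ``hard part'' you flagged. Your proposal is therefore structurally identical to the paper's proof, but with the key input left as a hope rather than established; to complete it you must either invoke Seip's theorem or reproduce its proof.
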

\begin{proof}
We write 
\begin{align*}
|D(s+i\tau)-D_N(s+i\tau)|&\leq |\log\zeta(s+i\tau)-\log\zeta_N(s+i\tau)|+\\
&\quad\quad\sum_{k=2}^{+\infty}k^{-1}|\log \zeta(k(s+i\tau))-\log\zeta_N(k(s+i\tau))|
\end{align*}
which leads to the study of two integrals. That 
$$\lim_{N\to+\infty}\limsup_{T\to+\infty} \frac{1}{\textrm{meas}(A_T)}\int_{A_T}\int_V |\log\zeta(s+i\tau)-\log\zeta_N(s+i\tau)|^2dsd\tau=0$$
can be found in \cite[Theorem 4.1]{Seip20}. Regarding the second integral, we write, setting $\sigma_1=\inf(\Re e(s):\ s\in V)>1/2$, for all $s\in V$,
\begin{align*}
\sum_{k\geq 2} k^{-1}|\log \zeta(k(s+i\tau))-\log\zeta_N(k(s+i\tau))|&\leq \sum_{l\geq N+1}\sum_{j\geq 1}\sum_{k\geq 2}\frac{p_l^{-kj\sigma_1}}{kj}\\
&\leq \sum_{l\geq N+1}\sum_{j\geq 1}\sum_{k\geq 2}p_l^{-kj\sigma_1}\\
&\leq C(\sigma_1) \sum_{l\geq N+1}p_l^{-2\sigma_1}\\
&\leq C(\sigma_1)\veps_N
\end{align*}
where $(\veps_N)$ goes to $0$. Therefore
$$\lim_{N\to+\infty}\limsup_{T\to+\infty}\frac{1}{\textrm{meas}(A_T)}\int_{A_T}\!\!\int_V\left|
\sum_{k\geq 2} \frac{\log \zeta(k(s+i\tau))-\log\zeta_N(k(s+i\tau))}k\right|^2dsd\tau=0.$$
\end{proof}

\begin{proof}[Proof of Proposition \ref{prop:primemeasure}]
The deduction of Proposition \ref{prop:primemeasure} from Lemma \ref{lem:primemeasure} is now routine and we shall be brief. Since convergence in Bergman spaces entails uniform convergence on compact subsets, we know that
$$\lim_{N\to+\infty}\limsup_{T\to+\infty}\frac{1}{\mathrm{meas}(A_T)}\int_{A_T}\sup_{s\in U} |D(s+i\tau)-D_N(s+i\tau)|^2 dsd\tau=0.$$
From Chebychev's inequality, we get that, for all $\veps>0$, 
$$\lim_{N\to+\infty}\limsup_{T\to+\infty}\frac{1}{\mathrm{meas}(A_T)} \mathrm{meas}\left(\tau\in A_T:\ \sup_{s\in U}|D(s+i\tau)-D_N(s+i\tau)|>\veps\right)=0.$$
Now we have already shown that $P_{D_N,U}\to P_{D,U}$ weakly as $N\to+\infty$ and that, for all $N\in\mathbb N$, $\nu_{T,D_N,U}\to P_{D_N,U}$ weakly as $T\to+\infty$. The proposition follows from standard facts about
weak convergence of probability measures (see \cite[Theorem 3.2]{Bil68} and \cite[Theorem 5.5]{Duy12}) and from the fact that $\mathcal N_{D,V}$ has density $1$.
\end{proof}

Our proof essentially says that a perturbation of $\log\zeta$ by a Dirichlet series which is absolutely convergent in $\mathbb C_{1/2}$ is still strongly universal in the critical strip. However, we had to reproduce and modify one proof of strong universality of $\log\zeta$. This can be done for other universal Dirichlet series and this motivates the following question:
\begin{question}
Let $D$ be a Dirichlet series which is (strongly) universal in some strip and let $D_0$ be a Dirichlet series (with the same frequencies) which is absolutely convergent in this strip. Is $D+D_0$ still (strongly) universal?
\end{question}

Our method seems to break down for the alternate prime series.
\begin{question}
Let $D(s)=\sum_{n\geq 1}(-1)^n p_n^{-s}$. Is $D$ strongly universal in $\{1/2<\Re e(s)<1\}$?
\end{question}


\providecommand{\bysame}{\leavevmode\hbox to3em{\hrulefill}\thinspace}
\providecommand{\MR}{\relax\ifhmode\unskip\space\fi MR }
\providecommand{\MRhref}[2]{%
  \href{http://www.ams.org/mathscinet-getitem?mr=#1}{#2}
}
\providecommand{\href}[2]{#2}

\end{document}